\DeclareMathOperator{\rt}{\mathsf{root}}
\DeclareMathOperator{\lh}{\mathsf{length}}
\DeclareMathOperator{\nbhds}{\mathsf{nbhds}}
\DeclareMathOperator{\spa}{\mathsf{span}}
\DeclareMathOperator{\shoot}{\mathsf{shoot}}
\newcommand{\inn}{\mathop{\dot{\in}}}
\DeclareMathOperator{\rr}{\mathsf{res}}
\author{Mikhail Patrakeev\footnote{Krasovskii Institute of Mathematics and Mechanics of UB RAS, 620990, 16 Sofia Kovalevskaya street, Yekaterinburg, Russia and Ural Federal University, 620002, 19 Mira street, Yekaterinburg, Russia; patrakeev@mail.ru}\footnote{
This work is supported by Act 211 Government of the Russian Federation,  contract no.\,02.A03.21.0006}
}
\title{A space with a Lusin $\pi\mathsurround=0pt$-base \\ whose square has no Lusin $\pi\mathsurround=0pt$-base\footnote{2010 Mathematics Subject Classification\textup{:} Primary 54{E}99; Secondary 54{H}05. Keywords\textup{:} Lusin pi-base, pi-tree, the Baire space, the Sorgenfrey line, Souslin scheme, Lusin scheme, product of topological spaces}
}
\date{}
\begin{document}
\hyphenation{no-n-in-cre-a-s-ing tran-si-ti-ve par-ti-al}
\renewcommand{\proofname}{\textup{\textbf{Proof}}}
\renewcommand{\abstractname}{\textup{Abstract}}
\renewcommand{\refname}{\textup{References}}
\mathsurround=2pt
\maketitle

\begin{abstract}
We construct a space ${X}$ that has a Lusin $\pi\mathsurround=0pt$-base and such that ${X}^{2}$ has no Lusin $\pi\mathsurround=0pt$-base.
\end{abstract}

\section{Introduction}
\label{section0}

The class of topological spaces with a Lusin $\pi\mathsurround=0pt$-base, see Definition~\ref{def.luz.p.base}, was introduced in~\cite{MPatr1}; this class equals the class of spaces with a $\pi\mathsurround=0pt$-tree~\cite[Remark~11]{MPatr2}.
In this paper we build a space with a Lusin $\pi\mathsurround=0pt$-base whose square has no Lusin $\pi\mathsurround=0pt$-base, see Theorem~\ref{teo}.

The Baire space $\omega^\omega,$ the Sorgenfrey line $\mathcal{S},$ and the irrational Sorgenfrey line $\mathcal{I}$ have a Lusin $\pi\mathsurround=0pt$-base~\cite{MPatr1,MPatr3}, and all at most countable products of $\omega^\omega,$ $\hspace{1pt}\mathcal{S},$ and $\mathcal{I}$ also have a Lusin $\pi\mathsurround=0pt$-base~\cite{MPatr3}.
If a space ${X}$ has a Lusin $\pi\mathsurround=0pt$-base, then the products ${X}\times\omega^\omega,$ ${X}\times\mathcal{S},$ and ${X}\times\mathcal{S}^{\omega}$ have a Lusin $\pi\mathsurround=0pt$-base~\cite{MPatr3},
and also ${X}\setminus{F}$ has a Lusin $\pi\mathsurround=0pt$-base whenever ${F}\subseteq{X}$ is a $\mathsurround=0pt\sigma$-compact~\cite{MPatr2} (but a dense open subset of $X$ can be without a Lusin $\pi\mathsurround=0pt$-base).

If a space ${X}$  has a Lusin $\pi\mathsurround=0pt$-base, then it can be mapped onto $\omega^\omega$ by a continuous one-to-one map and also ${X}\hspace{-1pt}$ can be mapped onto an arbitrary nonempty Polish space by a continuous open map~\cite{MPatr1}.
If a space ${X}\hspace{-1pt}$ has a Lusin $\pi\mathsurround=0pt$-base, then it has a countable $\hspace{1pt}\mathsurround=0pt\pi$-base and a countable pseudo-base (both with clopen members); and also $X$ is a Choquet space (but it can be not strong Choquet even in a separable metrizable case). For each ${M}\subseteq\omega^\omega$, there exists a separable metrizable space with a Lusin $\pi\mathsurround=0pt$-base that contains a closed subspace homeomorphic to ${M}.$

\section{Notation and terminology}
\label{section1}

We use terminology from~\cite{top.enc} and~\cite{kun}. A \emph{space} is a topological space. Also we use the following notations:

\begin{nota}\label{not01}%
  The symbol $\coloneq$ means ``equals by definition''\textup{;}
  the symbol ${\colon}{\longleftrightarrow}$ is used to show that an expression on the left side is an abbreviation for expression on the right side\textup{;}

  \begin{itemize}
  \item [\ding{46}\,]
    $\mathsurround=0pt
    \omega$ $\coloneq$ the set of finite ordinals $=$ the set of natural numbers,

    so $0=\varnothing\in\omega$ and ${n}=\{0,\ldots,{n}-1\}$ for all ${n}\in\omega;$
  \item [\ding{46}\,]
    $\mathsurround=0pt
    {s}\,$ is a \textit{sequence}$\quad{\colon}{\longleftrightarrow}\quad
    {s}$ is a function such that $\mathsf{domain}({s})\in\omega$ or $\mathsf{domain}({s})=\omega;$
  \item [\ding{46}\,]
    if ${s}$ is sequence, then

    $\mathsurround=0pt
    \lh({s})\:\coloneq\:\mathsf{domain}({s});$
 \item [\ding{46}\,]
   $\mathsurround=0pt
   \langle{s}_0,\ldots,{s}_{{n}-1}\rangle$ $\coloneq$
   the sequence ${s}$ such that  $\lh({s})={n}\in\omega$ and  ${s}(i)={s}_{i}$ for all ${i}\in{n};$
 \item [\ding{46}\,]
   $\mathsurround=0pt
   \langle\rangle$ $\coloneq$ the sequence of length {0};
 \item [\ding{46}\,]
   if ${s}=\langle{s}_0,\ldots,{s}_{{n}-1}\rangle,$ then

   $\mathsurround=0pt
   {s}\hspace{0.5pt}^{\frown}{x}
   \:\coloneq\:\langle{s}_0,\ldots,{s}_{{n}-1},{x}\rangle;$
  \item [\ding{46}\,]
    $\mathsurround=0pt
    {f}{\upharpoonright}{A}$ $\coloneq$ the restriction of function ${f}$ to ${A};$
  \item [\ding{46}\,]
    $\mathsurround=0pt
    {A}\subset{B}
    \quad{\colon}{\longleftrightarrow}\quad
    {A}\subseteq{B}\enskip\mathsf{and}\enskip{A}\neq{B};$
  \item [\ding{46}\,]
    if ${s}$ and ${t}$ are sequences, then

    $\mathsurround=0pt
    {s}\sqsubseteq{t}
    \quad{\colon}{\longleftrightarrow}\quad
    {s}={t}{\upharpoonright}\lh({s})\,,$

    $\mathsurround=0pt
    {s}\sqsubset{t}
    \quad{\colon}{\longleftrightarrow}\quad
    {s}\sqsubseteq{t}\enskip\mathsf{and}\enskip{s}\neq{t}$

    (actually, ${s}\sqsubseteq{t}\leftrightarrow{s}\subseteq{t}\;$
    and $\;{s}\sqsubset{t}\leftrightarrow{s}\subset{t}$);
  \item [\ding{46}\,]
    $\mathsurround=0pt
    {}^{B}\!{A}$ $\coloneq$ the set of functions from ${B}$ to ${A};$

    in particular, ${}^{0}\hspace{-1pt}{A}=\big\{\langle\rangle\big\};$
  \item [\ding{46}\,]
    $\mathsurround=0pt
    {}^{{<}\omega}\hspace{-1pt}{A}
    \:\coloneq\:\bigcup_{{n}\in\omega}{}^{{n}}\hspace{-1pt}{A}\,=\,$
    the set of finite sequences in ${A};$
  \item [\ding{46}\,]
    $\mathsurround=0pt
    {A}\,$ is a \textit{singleton}$\quad{\colon}{\longleftrightarrow}\quad
    {A}=\{{x}\}$ for some ${x}\,;$
  \item [\ding{46}\,]
    $\mathsurround=0pt
    \nbhds({p},{X})$ $\coloneq$
    the set of (not necessarily open) neighbourhoods of point ${p}$ in space~${X};$
  \item [\ding{46}\,]
    $\mathsurround=0pt
    \gamma\,$ is a $\pi\mathsurround=0pt$-\textit{net} for a space ${X}
    \quad{\colon}{\longleftrightarrow}\quad$for each nonempty open ${U}\subseteq{X},$ there is ${G}\in\gamma$ such that ${G}\subseteq{U},$ and all elements of $\gamma$ are nonempty;
  \item [\ding{46}\,]
    $\mathsurround=0pt
    \gamma\,$ is a $\pi\mathsurround=0pt$-\textit{base} for a space ${X}
    \quad{\colon}{\longleftrightarrow}\quad\gamma$ is a $\pi\mathsurround=0pt$-net for ${X}$ and all elements of $\gamma$ are open.
  \end{itemize}
\end{nota}

Recall that~\cite{kech} a \textit{Lusin scheme} on a set $X$ is a family ${L}=\langle{L}_{a}\rangle_{{a}\in{}^{{<}\omega\hspace{-1pt}}\omega}$ of subsets of ${X}$ such that

\begin{itemize}
\item[(L0)]
  $\mathsurround=0pt
  {L}_{a}\supseteq {L}_{{a}\hspace{0.5pt}^{\frown}{n}}\quad$ for all ${a}\in{}^{<\hspace{0.2pt}\omega}\hspace{-1pt}\omega,$
  ${n}\in\omega\enskip$  and
\item[(L1)]
  $\mathsurround=0pt
  {L}_{{a}\hspace{0.5pt}^{\frown}{n}}\cap
  {L}_{{a}\hspace{0.5pt}^{\frown}{m}}=\varnothing\quad$ for all ${a}\in{}^{<\hspace{0.2pt}\omega}\hspace{-1pt}\omega$ and ${n}\neq{m}\in\omega.$
\end{itemize}

\begin{deff}\label{def.luz.p.base}\mbox{ }

  \begin{itemize}
  \item [\ding{46}\,]
    A Lusin scheme ${L}$ on a set $X$ is \textbf{strict} \ iff
    \begin{itemize}
    \item[(L2)]
      $\mathsurround=0pt
      {L}_{\scriptscriptstyle\langle\rangle}={X},$
    \item[(L3)]
      $\mathsurround=0pt
      {L}_{a}=\bigcup_{n\in\omega} {L}_{{a}\hspace{0.5pt}^{\frown}{n}}\quad$ for all ${a}\in{}^{{<}\omega\hspace{-1pt}}\omega,\enskip$ and
    \item[(L4)]
      $\mathsurround=0pt
      \bigcap_{n\in\omega}{L}_{{p}{\upharpoonright}{n}}$ is a singleton for all ${p}\in{}^{\omega\hspace{-1pt}}\omega.$
    \end{itemize}
  \item [\ding{46}\,]
    A Lusin scheme ${L}$ on a space ${X}$ is \textbf{open} \ iff
    \begin{itemize}
    \item[(L5)]
      each ${L}_{a}$ is an open subset of ${X}.$
    \end{itemize}
  \item [\ding{46}\,]
    A \textbf{Lusin $\hspace{1pt}\mathsurround=0pt\pi$-base} for a space $X$ is an open strict Lusin scheme ${L}$ on $X$ such that
    \begin{itemize}
    \item[(L6)]
      $\mathsurround=0pt
      \forall{p}\in{X}\enskip\forall{U}\in\nbhds({p},{X})$

      $\mathsurround=0pt
      \exists {a}\in{}^{<\hspace{0.2pt}\omega}\hspace{-1pt}\omega\enskip
      \exists{k}\in\omega\ $ 
      \begin{itemize}
      \item[\ding{226}\,]
        $\mathsurround=0pt
        {L}_{a}\ni{p}\enskip$ and
      \item[\ding{226}\,]
        $\mathsurround=0pt
        \bigcup_{{j}\geqslant{k}}{L}_{{a}\hspace{0.5pt}^{\frown}{j}}\subseteq {U}.$
      \end{itemize}
    \end{itemize}
  \end{itemize}
\end{deff}

\begin{rema}\label{rem.st.luz.sch}
  Suppose that ${L}$ is a strict Lusin scheme. Then, for all ${a},{b}\in{}^{{<}\omega\hspace{-1pt}}\omega,$

  \begin{itemize}
  \item [(1)]
    $\mathsurround=0pt
    {L}_{{a}}\neq\varnothing;$
  \item [(2)]
    $\mathsurround=0pt
    {L}_{{a}}\supset{L}_{{b}}\ \longleftrightarrow\ {a}\sqsubset{b};$
  \item [(3)]
    $\mathsurround=0pt
    {L}_{{a}}\cap{L}_{{b}}\neq\varnothing\ \longleftrightarrow\
    ({a}\sqsubseteq{b}\textsf{ or }{a}\sqsupseteq{b}).$\hfill$\qed$%
  \end{itemize}
\end{rema}

  %


\begin{rema}\label{pi.base}
  Suppose that ${L}$ is a Lusin $\pi\mathsurround=0pt$-base for a space ${X}.$ Then the family $\{{L}_{{a}}:{a}\in{}^{{<}\omega\hspace{-1pt}}\omega\}$ is a $\pi\mathsurround=0pt$-base for ${X}.$\hfill$\qed$%
\end{rema}

\newpage
\begin{nota}\label{not.S.N}\mbox{\ }

  \begin{itemize}
  \item [\ding{46}\,]
     $\mathsurround=0pt
     \mathbf{S}$ $\coloneq$ the \textbf{standard Lusin scheme} $\coloneq$ the Lusin scheme $\langle\mathbf{S}_{a}\rangle_{{a}\in{}^{{<}\omega\hspace{-1pt}}\omega}$ such that

     $\mathsurround=0pt
     \mathbf{S}_{{a}}=\{{p}\in{}^{\omega\hspace{-1pt}}\omega:{a}\sqsubseteq{p}\}\quad$
     for all ${a}\in{}^{{<}\omega\hspace{-1pt}}\omega.$
  \item [\ding{46}\,]
     $\mathsurround=0pt
     {\omega^{\omega}}$ $\coloneq$ the \emph{Baire space} $\coloneq$ the space $\langle{}^{\omega\hspace{-1pt}}\omega,\tau_{\mathsf{prod}}\rangle,$ where $\tau_{\mathsf{prod}}$ is the Tychonoff product topology with $\omega$ carrying 
     the discrete topology.
  \end{itemize}
\end{nota}

\begin{rema}\label{rem.baire.space}
  \begin{itemize}
  \item [(1)]
    The family $\{\mathbf{S}_{{a}}:{a}\in{}^{{<}\omega\hspace{-1pt}}\omega\}$ is a base for the Baire space ${\omega^{\omega}}.$
  \item [(2)]
    The standard Lusin scheme $\mathbf{S}$ is a Lusin $\pi\mathsurround=0pt$-base for ${\omega^{\omega}}.$\hfill$\qed$%
  \end{itemize}
\end{rema}

\begin{nota}
  We denote by $\vartriangleleft$ the strict lexicographic order on ${}^{\omega\hspace{-1pt}}\omega;$ that is,
  for all ${p},{q}\in{}^{\omega\hspace{-1pt}}\omega,$

  \begin{itemize}
  \item[\ding{46}\,]
    $\mathsurround=0pt
    {p}\vartriangleleft{q}
    \quad{\colon}{\longleftrightarrow}\quad
    \exists{n},{i},{j}\in\omega$ such that
    \begin{itemize}
    \item[\ding{226}\,]
      $\mathsurround=0pt
      {p}{\upharpoonright}{n}={q}{\upharpoonright}{n},$
    \item[\ding{226}\,]
      $\mathsurround=0pt
      ({p}{\upharpoonright}{n})\hspace{0.5pt}^{\frown}{i}\sqsubseteq{p},$
    \item[\ding{226}\,]
      $\mathsurround=0pt
      ({q}{\upharpoonright}{n})\hspace{0.5pt}^{\frown}{j}\sqsubseteq{q},\quad$ and
    \item[\ding{226}\,]
      $\mathsurround=0pt
      {i}<{j}.$
    \end{itemize}
  \item [\ding{46}\,]
    $\mathsurround=0pt
    {p}{\downfilledspoon}\:\coloneq\:
    \{{p}\}\cup\{{q}\in{}^{\omega\hspace{-1pt}}\omega:{p}\vartriangleleft{q}\}.$
  \item [\ding{46}\,]
    $\mathsurround=0pt
    {A}{\uparrow}\:\coloneq\:
    \bigcup\{{p}{\downfilledspoon}:{p}\in{A}\}.$
  \end{itemize}
\end{nota}

\begin{rema}\label{rem.<}
  Suppose that ${a}\in{}^{{<}\omega\hspace{-1pt}}\omega$ and ${n},{m}\in\omega.$
  Then

  \begin{itemize}
  \item[\ding{226}\,]
    $\mathsurround=0pt
    \mathbf{S}_{{a}\hspace{0.5pt}^{\frown}{n}}\cap
    (\mathbf{S}_{{a}\hspace{0.5pt}^{\frown}{m}}{\uparrow})=\varnothing\quad$
    whenever ${n}<{m}.$\hfill$\qed$%
  \end{itemize}
\end{rema}

\section{Construction of a space $\langle{}^{\omega\hspace{-1pt}}\omega,\tau_{\scriptscriptstyle\mathcal{F}}\rangle$}

In this section, for each family $\mathcal{F}=\langle\mathcal{F}_{{p}}\rangle_{{p}\in{}^{\omega\hspace{-1pt}}\omega}$ of free filters on a $\omega,$ we construct a topology~$\tau_{\scriptscriptstyle\mathcal{F}}$ on~${}^{\omega\hspace{-1pt}}\omega$ such that the space
$\langle{}^{\omega\hspace{-1pt}}\omega,\tau_{\scriptscriptstyle\mathcal{F}}\rangle$ has a Lusin $\pi\mathsurround=0pt$-base, see Example~\ref{example}. In the next section we build a family $\mathcal{F}$ in such a way that the square of $\langle{}^{\omega\hspace{-1pt}}\omega,\tau_{\scriptscriptstyle\mathcal{F}}\rangle$ has no Lusin $\pi\mathsurround=0pt$-base.

First we reformulate condition (L6) of the definition of a Lusin $\pi\mathsurround=0pt$-base, see Definition~\ref{def.luz.p.base} and Remark~\ref{rem.L6'}, by using the notion of a \textit{shoot}:

\begin{nota}
  Suppose that ${L}$ is a Lusin scheme, ${a}\in{}^{{<}\omega\hspace{-1pt}}\omega,$ and ${k}\in\omega.$  Then

  \begin{itemize}
  \item [\ding{46}\,]
    $\mathsurround=0pt
    \widetilde{{L}}^{{k}}_{{a}}\:\coloneq\:
    \bigcup_{{j}\geqslant{k}}{L}_{{a}\hspace{0.5pt}^{\frown}{j}};$
  \item [\ding{46}\,]
    $\mathsurround=0pt
    \shoot_{{L}}({a})\:\coloneq\:
    \big\{\widetilde{{L}}^{{k}}_{{a}}:{k}\in\omega\big\};$
  \item [\ding{46}\,]
    $\mathsurround=0pt
    \gamma\inn{U}
    \quad{\colon}{\longleftrightarrow}\quad
    \exists{G}\in\gamma\:[{G}\subseteq{U}].$
  \end{itemize}
\end{nota}

\begin{rema}\label{rem.L6'}
  The clause (L6) of the definition of a Lusin $\pi\mathsurround=0pt$-base is equivalent to the following:

  \begin{itemize}
  \item[(L6')]
    $\mathsurround=0pt
    \forall{p}\in{X}\enskip\forall{U}\in\nbhds({p},{X})$

    $\mathsurround=0pt
    \exists {a}\in{}^{<\hspace{0.2pt}\omega}\hspace{-1pt}\omega\ $
    such that
    \begin{itemize}
    \item[\ding{226}\,]
    $\mathsurround=0pt
    {L}_{a}\ni{p}\enskip$ and
    \item[\ding{226}\,]
    $\mathsurround=0pt
    \shoot_{{L}}({a})\inn {U}.$\hfill$\qed$%
    \end{itemize}
  \end{itemize}
\end{rema}

It is convenient to call $\widetilde{{L}}^{{k}}_{{a}}$ a ${k}\mathsurround=0pt$-\textit{tail} of ${L}_{{a}},$ so that the $\shoot_{{L}}({a})$ is the set of tails of ${L}_{{a}}$ and $\shoot_{{L}}({a})\inn{U}$ iff some tail of ${L}_{{a}}$ is contained in ${U}.$ Note also that the family $\shoot_{{L}}({a})$ is closed under finite intersections. We will use the following simple properties of shoots:

\begin{rema}\label{rem.inn}
  Suppose that ${L}$ is a strict Lusin scheme and ${a}\in{}^{{<}\omega\hspace{-1pt}}\omega.$
  Then

  \begin{itemize}
  \item [(1)]
    $\mathsurround=0pt
    \shoot_{{L}}({a})\inn{L}_{{a}}.$
  \item [(2)]
    If $\shoot_{{L}}({a})\inn{A}$ and ${A}\subseteq{B}$,
    then
    $\shoot_{{L}}({a})\inn{B}.$
  \item [(3)]
    If $\shoot_{{L}}({a})\inn{A}$ and $\shoot_{{L}}({a})\inn{B}$,
    then
    $\shoot_{{L}}({a})\inn{A}\cap{B}.$
  \item [(4)]
    If ${A}\cap{B}=\varnothing,$
    then $\shoot_{{L}}({a})\not\inn{A}$ or $\shoot_{{L}}({a})\not\inn{B}.$\hfill$\qed$%
  \end{itemize}
\end{rema}

Next we define sets $\widehat{\mathbf{S}}_{{p}}({M},{g}),$ which will form basic neighbourhoods at points ${p}\in{}^{\omega\hspace{-1pt}}\omega$ for topology $\tau_{\scriptscriptstyle\mathcal{F}}$ in Example~\ref{example}.
Recall that $\mathbf{S}_{a}=\{{p}\in{}^{\omega\hspace{-1pt}}\omega:{a}\sqsubseteq{p}\},$ $\widetilde{\mathbf{S}}^{{k}}_{{a}}=  \bigcup_{{j}\geqslant{k}}\mathbf{S}_{{a}\hspace{0.5pt}^{\frown}{j}},$
and $[\omega]^{\omega}$ is the set of infinite subsets of $\omega.$

\begin{nota}
  Suppose that ${p}\in{}^{\omega\hspace{-1pt}}\omega,$ ${M}\in[\omega]^{\omega},$ and ${g}\colon{M}\to\omega.$ Then

  \begin{itemize}
  \item[\ding{46}\,]
    $\mathsurround=0pt\displaystyle
    \widehat{\mathbf{S}}_{{p}}({M},{g})\:\coloneq\:
    \{{p}\}\,\cup\bigcup_{{m}\in{M}}\widetilde{\mathbf{S}}^{{g}({m})}_{{p}{\upharpoonright}{m}}.$
  \end{itemize}
\end{nota}

\begin{rema}\label{rem.lhd}
  Suppose that
  ${p}\in{}^{\omega\hspace{-1pt}}\omega,$ ${M}\in[\omega]^{\omega},$ ${g}\colon{M}\to\omega,$ and ${g}({m})>{p}({m})$ for all ${m}\in{M}.$
  Then:

  \begin{itemize}
  \item[(1)]
    $\mathsurround=0pt
    \widetilde{\mathbf{S}}^{{g}({m})}_{{p}{\upharpoonright}{m}}
    \,\subseteq\;{\mathbf{S}}_{{p}{\upharpoonright}{m}}
    \!\setminus{\mathbf{S}}_{{p}{\upharpoonright}({m}+1)}\quad$ for all ${m}\in{M}.$
  \item[(2)]
    $\mathsurround=0pt
    \widehat{\mathbf{S}}_{{p}}({M},{g})\,\subseteq\,{p}{\downfilledspoon}\,.$\hfill$\qed$%
  \end{itemize}
\end{rema}

Recall that $\gamma$ is a \textit{filter} on $\omega$ iff $\gamma$ is a family of nonempty subsets of $\omega$ such that (1) ${A}\cap{B}\in\gamma$ for all ${A},{B}\in\gamma$ and (2) ${C}\in\gamma$ whenever $\omega\supseteq{C}\supseteq{A}\in\gamma.$
A filter $\gamma$ is \textit{free} iff $\bigcap\{{A}:{A}\in\gamma\}=\varnothing.$ Now we construct a space $\langle{}^{\omega\hspace{-1pt}}\omega,\tau_{\scriptscriptstyle\mathcal{F}}\rangle,$ which has $\mathbf{S}$ as a Lusin $\pi\mathsurround=0pt$-base:

\begin{exam}\label{example}
  Suppose that $\mathcal{F}=\langle\mathcal{F}_{{p}}\rangle_{{p}\in{}^{\omega\hspace{-1pt}}\omega}$ is an indexed family of free filters on $\omega.$ Then

  \begin{itemize}
  \item[\ding{46}\,]
    $\mathsurround=0pt
    \tau_{\scriptscriptstyle\mathcal{F}}\:\coloneq\:$    the topology on ${}^{\omega\hspace{-1pt}}\omega$ generated by the subbase
    $\big\{\,\widehat{\mathbf{S}}_{{p}}({M},{g})\::\:
    {p}\in{}^{\omega\hspace{-1pt}}\omega,\:{M}\in\mathcal{F}_{{p}},\ \mathsf{and}\ {g}\colon{M}\to\omega\big\}.$
  \end{itemize}
  The space $\langle{}^{\omega\hspace{-1pt}}\omega,\tau_{\scriptscriptstyle\mathcal{F}}\rangle$ has the following properties:

  \begin{itemize}
  \item[(F1)]
    For each point ${p}\in{}^{\omega\hspace{-1pt}}\omega,$ the family  $\big\{\,\widehat{\mathbf{S}}_{{p}}({M},{g}):  {M}\in\mathcal{F}_{{p}}\enskip\mathsf{and}\enskip{g}\colon{M}\to\omega\big\}$
    is a neighbourhood base for $\langle{}^{\omega\hspace{-1pt}}\omega,
    \tau_{\scriptscriptstyle\mathcal{F}}\rangle$ at ${p}.$
  \item[(F2)]
    The standard Lusin scheme $\mathbf{S}$ is a Lusin $\pi\mathsurround=0pt$-base for $\langle{}^{\omega\hspace{-1pt}}\omega,\tau_{\scriptscriptstyle\mathcal{F}}\rangle.$\hfill$\qed$%
  \end{itemize}
\end{exam}

It is interesting to note one additional property of the above space, although we do not use it in this paper.
In~\cite{MPatr3} we found sufficient conditions under which the product of spaces that have a Lusin $\pi\mathsurround=0pt$-base also has a Lusin $\pi\mathsurround=0pt$-base; these conditions were formulated by using the following terminology: For a Lusin scheme ${L}$ on a space ${X},$ a point ${p}\in{X}$ and its neighbourhood ${U}\subseteq{X},$ the \textit{span} of ${U}$ at ${p}$ is defined as follows:
$$
\spa_{{L}}({U},{p})\:\coloneq\:
\{\,\lh({a}):{a}\in{}^{{<}\omega\hspace{-1pt}}\omega,\ \ {L}_{a}\ni{p},\ \ \mathsf{and}\ \ \shoot_{{L}}({a})\inn{U}\,\}\:\subseteq\:\omega
$$
(so that clause (L6') of Remark~\ref{rem.L6'} is equivalent to the assertion that every
$\spa_{{L}}({U},{p})$ is nonempty).
If ${L}$ is a Lusin $\pi\mathsurround=0pt$-base for ${X}$ and ${p}\in{X},$ then
the family $\{\spa_{{L}}({U},{p}):{U}\in\nbhds({p},{X})\}$ has the finite intersection property~\cite[Lemma\,3.6]{MPatr3}, so we can define
the $\rt_{{L}}({p},{X})$ to be the filter on $\omega$ generated by this family.
A theorem in~\cite{MPatr3} says that if some spaces have Lusin $\pi\mathsurround=0pt$-bases with ``good'' roots, then the product of these spaces has a Lusin $\pi\mathsurround=0pt$-base.

Now, the space $\langle{}^{\omega\hspace{-1pt}}\omega,
\tau_{\scriptscriptstyle\mathcal{F}}\rangle$ from Example~\ref{example} has the following property:

\begin{itemize}
\item[(F3)]
  $\mathsurround=0pt
  \rt_{\hspace{1pt}\mathbf{S}}\big({p},\langle{}^{\omega\hspace{-1pt}}\omega,
  \tau_{\scriptscriptstyle\mathcal{F}}\rangle\big)=\mathcal{F}_{{p}}$ \quad for all ${p}\in{}^{\omega\hspace{-1pt}}\omega.$
\end{itemize}
Therefore we can build a space with a Lusin $\pi\mathsurround=0pt$-base that has any roots we wish.

\section{Construction of a space $\langle{}^{\omega\hspace{-1pt}}\omega,\tau_{\scriptscriptstyle\mathcal{F}}\rangle$ whose square
\\ has no Lusin $\pi\mathsurround=0pt$-base}

In this section we prove Theorem~\ref{teo}, the main result of the article, which says that there exists a space with a Lusin $\pi\mathsurround=0pt$-base whose square has no Lusin $\pi\mathsurround=0pt$-base.
To build such a space we construct a family $\mathcal{F}$ of free filters on $\omega$ in such a way that the space $\langle{}^{\omega\hspace{-1pt}}\omega,\tau_{\scriptscriptstyle\mathcal{F}}\rangle^{2}$ has no Lusin $\pi\mathsurround=0pt$-base. We construct this family in the proof of Lemma~\ref{lemma.main}, so Theorem~\ref{teo} is a corollary to this lemma. To make the logic of construction more transparent, we state Lemmas~\ref{continuum.igr},~\ref{cand},~\ref{lem.sigma_L},~\ref{lem.M^i.g^i} without proofs and prove them in Section~\ref{sect.proofs.of.lemmas}.

\begin{nota}
  \mbox{ }

  \begin{itemize}
  \item[\ding{46}\,]
    A \textbf{game} is an ordered pair of sequences $\Gamma=\big\langle\langle{c}_{{n}}\rangle_{{n}\in\omega},  \langle{d}_{{n}}\rangle_{{n}\in\omega}\big\rangle$ such that, for all ${n}\in\omega,$
    \begin{itemize}
    \item[\ding{226}\,]
      $\mathsurround=0pt
      {c}_{{n}}=\langle{c}^{0}_{{n}},{c}^{1}_{{n}}\rangle\in{}^{{<}\omega\hspace{-1pt}}\omega
      \times{}^{{<}\omega\hspace{-1pt}}\omega,$
    \item[\ding{226}\,]
      $\mathsurround=0pt
      {d}_{{n}}=\langle{d}^{0}_{{n}},{d}^{1}_{{n}}\rangle\in{}^{{<}\omega\hspace{-1pt}}\omega
      \times{}^{{<}\omega\hspace{-1pt}}\omega,$
    \item[\ding{226}\,]
      $\mathsurround=0pt
      {c}^{0}_{{n}}\sqsubset{d}^{0}_{{n}}\sqsubset{c}^{0}_{{n}+1},\,$ and
      $\,{c}^{1}_{{n}}\sqsubset{d}^{1}_{{n}}\sqsubset{c}^{1}_{{n}+1}.$
    \end{itemize}
  \item[\ding{46}\,]
    The \textbf{result} of a game $\Gamma$ is the pair $\big\langle\rr^{0}(\Gamma),\rr^{1}(\Gamma)\big\rangle
    \in{}^{\omega\hspace{-1pt}}\omega\times{}^{\omega\hspace{-1pt}}\omega$ such that, for all ${n}\in\omega,$
    \begin{itemize}
    \item[\ding{226}\,]
      $\mathsurround=0pt
      {c}^{0}_{{n}}\sqsubseteq\rr^{0}(\Gamma)\,$ and
      ${c}^{1}_{{n}}\sqsubseteq\rr^{1}(\Gamma).$
    \end{itemize}
  \item[\ding{46}\,]
    A \textbf{strategy} is an ordered pair $\sigma=\langle\sigma^{0},\sigma^{1}\rangle$ of functions $\sigma^{0},\sigma^{1}\colon{}^{{<}\omega\hspace{-1pt}}\omega\times{}^{{<}\omega\hspace{-1pt}}\omega\to
    {}^{{<}\omega\hspace{-1pt}}\omega$ such that for each ${c}=\langle{c}^{0},{c}^{1}\rangle\in{}^{{<}\omega\hspace{-1pt}}\omega\times{}^{{<}\omega\hspace{-1pt}}\omega,$
    \begin{itemize}
    \item[\ding{226}\,]
      $\mathsurround=0pt
      {c}^{0}\sqsubset\sigma^{0}({c})$ and
      ${c}^{1}\sqsubset\sigma^{1}({c}).$
    \end{itemize}
  \item[\ding{46}\,]
    A game $\Gamma$ \textbf{follows} a strategy $\sigma$
    iff\; for all ${n}\in\omega,$
    \begin{itemize}
    \item[\ding{226}\,]
      $\mathsurround=0pt
      {d}^{0}_{{n}}=\sigma^{0}({c}_{{n}})$ and ${d}^{1}_{{n}}=\sigma^{1}({c}_{{n}}).$
    \end{itemize}

  \end{itemize}
\end{nota}

\begin{rema}\label{rem.game}
  Suppose that
  $\Gamma=\big\langle\langle{c}_{{n}}\rangle_{{n},{m}\in\omega},  \langle{d}_{{n}}\rangle_{{n}\in\omega}\big\rangle$ is a game.
  Then, for all ${i}\in{2}=\{0,1\}$ and ${n},{m}\in\omega,$

  \begin{itemize}
  \item[(1)]
    $\mathsurround=0pt
    \mathbf{S}_{{c}^{{i}}_{{n}}}\,\supset\:\mathbf{S}_{{d}^{{i}}_{{n}}}
    \,\supset\:\mathbf{S}_{{c}^{{i}}_{{n}+1}}\ni\:\rr^{{i}}(\Gamma);$
  \item[(2)]
    $\mathsurround=0pt
    {c}^{i}_{{n}}=\rr^{{i}}(\Gamma){\upharpoonright}\lh({c}^{i}_{{n}});$
  \item[(3)]
    $\mathsurround=0pt
    {d}^{i}_{{n}}=\rr^{{i}}(\Gamma){\upharpoonright}\lh({d}^{i}_{{n}});$
  \item[(4)]
    $\mathsurround=0pt
    \lh({c}^{i}_{{n}+1})-1\geqslant\lh({d}^{i}_{{n}});$
  \item[(5)]
    $\mathsurround=0pt
    {c}^{{i}}_{{n}}\sqsubseteq{c}^{{i}}_{{m}}
    \ \longleftrightarrow\ {n}\leqslant{m};$
  \item[(6)]
    $\mathsurround=0pt
    {d}^{{i}}_{{n}}\sqsubseteq{d}^{{i}}_{{m}}
    \ \longleftrightarrow\ {n}\leqslant{m}.$\hfill$\qed$%
  \end{itemize}

\end{rema}

\begin{lemm}\label{continuum.igr}
  For each strategy $\sigma,$ there exists a family $\langle\Gamma_{{z}}\rangle_{{z}\in{}^{\omega}{2}}$ of games that follow $\sigma$ such that $\rr^{i}(\Gamma_{{z}})\neq\rr^{{j}}(\Gamma_{{y}})$
  for all $\langle{z},{i}\rangle\neq\langle{y},{j}\rangle\in{}^{\omega}{2}\times{2}.$
\end{lemm}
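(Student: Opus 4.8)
The plan is to build the desired family $\langle\Gamma_z\rangle_{z\in{}^\omega 2}$ by a level‑by‑level recursion on $n\in\omega$, maintaining at stage $n$ a finite branching of ``partial games'': for each finite binary string $w\in{}^n 2$ a pair $\langle c_w,d_w\rangle$ of moves consistent with $\sigma$, so that the data indexed by $w^\frown 0$ and by $w^\frown 1$ both extend the data indexed by $w$ but differ from each other in a controlled way. Given $z\in{}^\omega 2$, the game $\Gamma_z$ is then read off as the union along the branch $\langle z\!\upharpoonright\!n\rangle_{n\in\omega}$; because every move in the recursion is produced by applying $\sigma$, the resulting game automatically follows $\sigma$, and the nesting conditions ${c^i_n\sqsubset d^i_n\sqsubset c^i_{n+1}}$ from the definition of a game will be guaranteed by construction. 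The nontrivial point is to arrange, during the recursion, enough ``spreading'' on both coordinates $i\in 2$ so that the $2^{\aleph_0}$‑many results $\rr^i(\Gamma_z)$, as $\langle z,i\rangle$ ranges over ${}^\omega 2\times 2$, are pairwise distinct.

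First I would handle the branching step. Suppose at stage $n$ we have $\langle c_w,d_w\rangle$ for $w\in{}^n 2$; put $c_{w^\frown 0}$ and $c_{w^\frown 1}$ to be two distinct one‑step extensions (in both coordinates) of $d_w=\sigma(c_w)$ — e.g. append $0$ versus $1$ to $d^0_w$, which is legitimate since (L4)/Remark~\ref{rem.game}(4) leave room, and do whatever is convenient on the first coordinate, say always append $0$ there for $w^\frown 0$ and append $0$ there too for $w^\frown 1$ but differ only on the second coordinate — actually it is cleaner to make the two children differ on a single, fixed coordinate that alternates with $n$, so that over infinitely many stages both coordinates get split infinitely often. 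Then set $d_{w^\frown j}=\sigma(c_{w^\frown j})$. Along a branch $z$ this yields an honest game $\Gamma_z$ following $\sigma$ by Remark~\ref{rem.game}, and $\rr^i(\Gamma_z)=\bigcup_n c^i_{z\upharpoonright n}$.

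Next, distinctness. For $z\neq y$ in ${}^\omega 2$ let $n$ be least with $z(n)\neq y(n)$; at that stage the two children $c_{(z\upharpoonright n)^\frown z(n)}$ and $c_{(z\upharpoonright n)^\frown y(n)}$ were chosen to differ on the coordinate $i_n$ we split at stage $n$, and all later moves only extend these, so $\rr^{i_n}(\Gamma_z)\neq\rr^{i_n}(\Gamma_y)$; to get $\rr^i(\Gamma_z)\neq\rr^i(\Gamma_y)$ for \emph{both} $i$, arrange the split at stage $n$ to disagree on \emph{both} coordinates simultaneously (append $0$ vs.\ $1$ on coordinate $0$ and on coordinate $1$ at the branching step — there is always room for this, and it does not conflict with following $\sigma$ since the splitting happens in the $c$‑moves, which are the ``input'' moves). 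Finally, the cross case $\rr^i(\Gamma_z)\neq\rr^j(\Gamma_y)$ with $i\neq j$ and possibly $z=y$: here I would additionally build into the construction that $\rr^0(\Gamma_z)$ and $\rr^1(\Gamma_z)$ lie in disjoint basic clopen sets $\mathbf S_{a}$, $\mathbf S_{b}$ for some fixed incompatible $a,b$ fixed once and for all at stage $0$ (just start the recursion with $c^0_{\langle\rangle}=\langle 0\rangle$, $c^1_{\langle\rangle}=\langle 1\rangle$); then the two coordinates of \emph{every} game in the family, and across all games, are separated, so an index collision between coordinate $0$ of one game and coordinate $1$ of another is impossible.

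The main obstacle I anticipate is bookkeeping rather than depth: one must choose the branching moves so that \emph{every} coordinate is genuinely ``spread'' at \emph{every} branching so that $z\neq y$ forces inequality in \emph{both} results, while simultaneously respecting the strict‑extension constraints $c^i_n\sqsubset\sigma^i(c_n)\sqsubset c^i_{n+1}$ imposed by $\sigma$ — in particular one is only free in the $c$‑moves, and must verify there is always an admissible choice of two incompatible extensions of each $d^i_w$. Once the recursion is set up so that the children split incompatibly on both coordinates and the root fixes $\rr^0,\rr^1$ into disjoint clopen pieces, the verification that the constructed sequences are games, follow $\sigma$, and have pairwise distinct results is a routine unwinding of Remark~\ref{rem.game}.
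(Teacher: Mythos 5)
Your proposal is correct and is essentially the paper's proof: both build a binary tree of finite partial plays indexed by ${}^{<\omega}2$, with the $d$-moves given by $\sigma$ and the $c$-moves chosen as strictly extending, pairwise incompatible sequences, and then read off the games along branches. The only difference is bookkeeping: the paper maintains the single invariant (B3) that all level-$n$ approximations (over both coordinates) are pairwise incomparable, whereas you achieve the same effect by splitting both coordinates at each branching and separating the two coordinates at the root; the final verification via least disagreement level is the same.
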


Recall that a family $\gamma$ is a $\pi\mathsurround=0pt$-net for a space ${X}$ iff $\varnothing\nin\gamma$ and each nonempty open ${U}\subseteq{X}$ contains some ${G}\in\gamma$ as a subset. Recall also that ${\omega^{\omega}}$ is the Baire space $\langle{}^{\omega\hspace{-1pt}}\omega,\tau_{\mathsf{prod}}\rangle.$


\begin{nota}
  A \textbf{candidate} is a strict Lusin scheme ${L}$ on the set
  ${}^{\omega\hspace{-1pt}}\omega\times{}^{\omega\hspace{-1pt}}\omega$
  such that $\{{L}_{{b}}:{b}\in{}^{{<}\omega\hspace{-1pt}}\omega\}$ is a $\pi\mathsurround=0pt$-net for the space ${\omega^{\omega}}\times{\omega^{\omega}}$
  and each ${L}_{{b}}$ has nonempty interior in ${\omega^{\omega}}\times{\omega^{\omega}}.$
\end{nota}

\begin{lemm}\label{cand}
  Suppose that the standard Lusin scheme $\mathbf{S}$ is a Lusin $\pi\mathsurround=0pt$-base for a space $\langle{}^{\omega\hspace{-1pt}}\omega,\tau\rangle$
  and $\,{L}\!$ is a Lusin $\pi\mathsurround=0pt$-base for $\langle{}^{\omega\hspace{-1pt}}\omega,\tau\rangle\times\langle{}^{\omega\hspace{-1pt}}\omega,\tau\rangle.$
  Then ${L}$ is a candidate.
\end{lemm}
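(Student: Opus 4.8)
The plan is to verify the three defining properties of a candidate for the given $L$: that $L$ is a strict Lusin scheme on ${}^{\omega}\omega\times{}^{\omega}\omega$, that $\{L_b:b\in{}^{<\omega}\omega\}$ is a $\pi$-net for $\omega^\omega\times\omega^\omega$, and that each $L_b$ has nonempty interior in $\omega^\omega\times\omega^\omega$. The first property is immediate, since being a Lusin $\pi$-base for the product space $\langle{}^{\omega}\omega,\tau\rangle\times\langle{}^{\omega}\omega,\tau\rangle$ includes being an open strict Lusin scheme on the underlying set ${}^{\omega}\omega\times{}^{\omega}\omega$ by Definition~\ref{def.luz.p.base}; we only need to note that the underlying set of $\langle{}^{\omega}\omega,\tau\rangle\times\langle{}^{\omega}\omega,\tau\rangle$ is ${}^{\omega}\omega\times{}^{\omega}\omega$ as a set.

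The main point is to pass from openness in the topology $\tau\times\tau$ to having nonempty interior in the coarser product topology on $\omega^\omega\times\omega^\omega$. Here the hypothesis that $\mathbf{S}$ is a Lusin $\pi$-base for $\langle{}^{\omega}\omega,\tau\rangle$ is what does the work: it will imply that $\tau$ is finer than the Baire-space topology $\tau_{\mathsf{prod}}$. Indeed, by Remark~\ref{pi.base} the family $\{\mathbf{S}_a:a\in{}^{<\omega}\omega\}$ is a $\pi$-base for $\langle{}^{\omega}\omega,\tau\rangle$; combined with (L6')/(L6) applied at a point $p$ with neighbourhood a given $\tau$-open set $U\ni p$, and with Remark~\ref{rem.inn}(1), one extracts for each $p\in U$ some $a\sqsubseteq p$ with $\mathbf{S}_a\ni p$ and a tail $\widetilde{\mathbf{S}}^k_a\subseteq U$; but actually it is cleaner to argue directly that each $\mathbf{S}_a$ is $\tau$-open. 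To see this, fix $a$ and $p\in\mathbf{S}_a$; since $\mathbf{S}$ is a Lusin $\pi$-base for $\langle{}^{\omega}\omega,\tau\rangle$, applying (L6') with the $\tau$-neighbourhood $U=\mathbf{S}_a$ of $p$ yields $b\in{}^{<\omega}\omega$ with $\mathbf{S}_b\ni p$ and $\shoot_{\mathbf{S}}(b)\inn\mathbf{S}_a$; by Remark~\ref{rem.inn}(1), $\shoot_{\mathbf{S}}(b)\inn\mathbf{S}_b$, and since $\mathbf{S}_b$ is $\tau$-open (it is an $L$-cell... wait—one must instead note that $\mathbf{S}$ being an \emph{open} Lusin scheme on $\langle{}^{\omega}\omega,\tau\rangle$ means every $\mathbf{S}_b$ is already $\tau$-open by clause (L5)). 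So in fact every $\mathbf{S}_b$ is $\tau$-open directly from (L5), hence $\tau\supseteq\tau_{\mathsf{prod}}$ by Remark~\ref{rem.baire.space}(1), and likewise $\tau\times\tau$ refines the product topology on $\omega^\omega\times\omega^\omega$.

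With $\tau\times\tau$ refining the Baire product topology, I finish as follows. For nonempty interior: each $L_b$ is $\tau\times\tau$-open and, by Remark~\ref{rem.st.luz.sch}(1), nonempty; but a nonempty $\tau\times\tau$-open set need not be open in the coarser topology, so instead I use (L6') of the Lusin $\pi$-base $L$ at a point $q\in L_b$ together with a basic Baire-product neighbourhood. Concretely, since $L$ is a Lusin $\pi$-base for the product and $L_b$ is a $\tau\times\tau$-neighbourhood of any of its points $q$, clause (L6') gives $c$ with $L_c\ni q$ and $\shoot_L(c)\inn L_b$; iterating, one reaches arbitrarily deep cells inside $L_b$, and because $\{\mathbf{S}_a\times\mathbf{S}_{a'}\}$ is a base for $\omega^\omega\times\omega^\omega$ which is contained in $\tau\times\tau$, each such cell $L_c$, being $\tau\times\tau$-open and nonempty, contains a point $q'$; then the argument from the previous paragraph shows $L_c$ contains a set $\mathbf{S}_a\times\mathbf{S}_{a'}$—here I use that the cells $\mathbf{S}_a$ are a base for $\tau_{\mathsf{prod}}$ and $\mathbf{S}$ witnesses (L6') for $\tau$, so some tail, hence some $\mathbf{S}_a$, sits inside the $\tau$-open set in question. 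Thus each $L_b$ has nonempty Baire-product interior. For the $\pi$-net property: given a nonempty Baire-product-open $V\subseteq\omega^\omega\times\omega^\omega$, it is in particular nonempty and $\tau\times\tau$-open, so by Remark~\ref{pi.base} applied to the Lusin $\pi$-base $L$ there is $b$ with $L_b\subseteq V$; and $\varnothing\nin\{L_b:b\}$ by Remark~\ref{rem.st.luz.sch}(1). Hence $\{L_b\}$ is a $\pi$-net for $\omega^\omega\times\omega^\omega$, completing the verification that $L$ is a candidate.

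The step I expect to be the real obstacle is the nonempty-interior claim: one must resist the temptation to think "$\tau\times\tau$-open $\Rightarrow$ Baire-open" (false in general) and instead genuinely exploit that \emph{both} Lusin $\pi$-bases $\mathbf{S}$ and $L$ satisfy (L6'), chaining the shoot-refinement of $L$ down into a cell and then the shoot-refinement of $\mathbf{S}$ (equivalently, that the $\mathbf{S}_a$ are $\tau$-clopen and form a Baire base) to locate an honest Baire-basic box inside it. The other two properties are routine once $\tau\supseteq\tau_{\mathsf{prod}}$ is in hand.
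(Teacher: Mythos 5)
Your proposal is correct and follows essentially the paper's own route: (L5) for $\mathbf{S}$ together with Remark~\ref{rem.baire.space}(1) gives $\tau\supseteq\tau_{\mathsf{prod}}$, Remark~\ref{pi.base} applied to $L$ (plus the refinement) gives the $\pi\mathsurround=0pt$-net property, and exhibiting a box $\mathbf{S}_{a^{0}}\times\mathbf{S}_{a^{1}}$ inside each $L_{b}$ gives the nonempty Baire interior. The only divergence is cosmetic: your detour through iterating (L6') of $L$ is unnecessary, and the one step you leave implicit (``the $\tau\mathsurround=0pt$-open set in question'') is precisely what the paper states outright --- the nonempty $\tau\times\tau$-open set $L_{b}$ contains a box $U^{0}\times U^{1}$ of nonempty $\tau\mathsurround=0pt$-open sets, to each factor of which one applies the $\pi\mathsurround=0pt$-base property of $\mathbf{S}$.
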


\begin{lemm}\label{lem.sigma_L}
  For each candidate ${L},$
  there exists a strategy $\sigma$
  that possesses the following property:

  \begin{itemize}
  \item[\textup{(\ding{72})}]
    For each ${c}=\langle{c}^{0},{c}^{1}\rangle\in{}^{{<}\omega\hspace{-1pt}}\omega\times{}^{{<}\omega\hspace{-1pt}}\omega,$

    there is ${b}\in{}^{{<}\omega\hspace{-1pt}}\omega$ such that
    \begin{itemize}
    \item[\ding{226}\,]
      $\mathsurround=0pt
      \mathbf{S}_{{c}^{0}}\times\mathbf{S}_{{c}^{1}}\:\supseteq\:
      {L}_{{b}}\:\supseteq\:
      \mathbf{S}_{\sigma^{0}({c})}\times
      \mathbf{S}_{\sigma^{1}({c})}\quad$ and
    \item[\ding{226}\,]
      $\mathsurround=0pt
      \shoot_{{L}}({b}{\upharpoonright}{k})\:\not\inn\:
      \big(\mathbf{S}_{\sigma^{0}({c})}{\uparrow}
      \times\mathbf{S}_{\sigma^{1}({c})}\big)\cup
      \big(\mathbf{S}_{\sigma^{0}({c})}
      \times(\mathbf{S}_{\sigma^{1}({c})}{\uparrow})\big)\quad$
      for all ${k}<\lh({b}).$
    \end{itemize}
  \end{itemize}
\end{lemm}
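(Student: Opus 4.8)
The plan is to handle each input ${c}=\langle{c}^{0},{c}^{1}\rangle$ independently, since a strategy is just a pair of functions on ${}^{{<}\omega\hspace{-1pt}}\omega\times{}^{{<}\omega\hspace{-1pt}}\omega$ with no compatibility required between distinct arguments. Fix ${c}$; if ${c}=\langle\langle\rangle,\langle\rangle\rangle$ take ${b}:=\langle\rangle$, $\sigma^{i}({c}):=\langle{0}\rangle$, and (\ding{72}) holds vacuously, so assume ${c}\neq\langle\langle\rangle,\langle\rangle\rangle$. For $i\in{2}$ put $\widehat{{c}}^{\,i}:={c}^{i}$ if ${c}^{i}$ has a nonzero entry and $\widehat{{c}}^{\,i}:={c}^{i}\hspace{0.5pt}^{\frown}{1}$ otherwise, so ${c}^{i}\sqsubseteq\widehat{{c}}^{\,i}$ and $\widehat{{c}}^{\,i}$ has a nonzero entry. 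Write $W({a},{e}):=\big(\mathbf{S}_{{a}}{\uparrow}\times\mathbf{S}_{{e}}\big)\cup\big(\mathbf{S}_{{a}}\times(\mathbf{S}_{{e}}{\uparrow})\big)$ for ${a},{e}\in{}^{{<}\omega\hspace{-1pt}}\omega$, and ${B}:=\mathbf{S}_{\widehat{{c}}^{\,0}}\times\mathbf{S}_{\widehat{{c}}^{\,1}}$. From the definition of ${\uparrow}$ one checks the monotonicity $\mathbf{S}_{{d}}{\uparrow}\subseteq\mathbf{S}_{\widehat{{c}}^{\,i}}{\uparrow}$ whenever $\widehat{{c}}^{\,i}\sqsubseteq{d}$, hence $W({d}^{0},{d}^{1})\subseteq W(\widehat{{c}}^{\,0},\widehat{{c}}^{\,1})$ for all ${d}^{i}\sqsupseteq\widehat{{c}}^{\,i}$; therefore the open set
\[
{W}^{\dagger}\;:=\;\big({}^{\omega\hspace{-1pt}}\omega\times{}^{\omega\hspace{-1pt}}\omega\big)\setminus W(\widehat{{c}}^{\,0},\widehat{{c}}^{\,1})
\]
depends only on ${c}$, lies in $\big(W({d}^{0},{d}^{1})\big)^{c}$ for all ${d}^{i}\sqsupseteq\widehat{{c}}^{\,i}$, is disjoint from ${B}\subseteq\mathbf{S}_{\widehat{{c}}^{\,0}}\times(\mathbf{S}_{\widehat{{c}}^{\,1}}{\uparrow})\subseteq W(\widehat{{c}}^{\,0},\widehat{{c}}^{\,1})$, and is nonempty, since it contains every $\langle{x},{y}\rangle$ with ${x}\notin\mathbf{S}_{\widehat{{c}}^{\,0}}{\uparrow}$ and such ${x}$ exist because $\widehat{{c}}^{\,0}$ has a nonzero entry.

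Next I would reformulate (\ding{72}). By the definition of $\shoot$, $\shoot_{{L}}({a})\not\inn W$ holds exactly when $\{\,{j}\in\omega:{L}_{{a}\hspace{0.5pt}^{\frown}{j}}\not\subseteq W\,\}$ is infinite, and $\,{L}_{{a}\hspace{0.5pt}^{\frown}{j}}\cap{W}^{\dagger}\neq\varnothing$ implies ${L}_{{a}\hspace{0.5pt}^{\frown}{j}}\not\subseteq W({d}^{0},{d}^{1})$ because ${W}^{\dagger}\subseteq\big(W({d}^{0},{d}^{1})\big)^{c}$. Hence it suffices to produce ${b}\in{}^{{<}\omega\hspace{-1pt}}\omega$ with ${L}_{{b}}\subseteq{B}$ such that for every ${k}<\lh({b})$ infinitely many children ${L}_{({b}{\upharpoonright}{k})\hspace{0.5pt}^{\frown}{j}}$ meet ${W}^{\dagger}$: given such a ${b}$, use that ${L}$ is a candidate to choose a box $\mathbf{S}_{{d}^{0}}\times\mathbf{S}_{{d}^{1}}\subseteq\mathrm{int}({L}_{{b}})$ with $\widehat{{c}}^{\,i}\sqsubseteq{d}^{i}$ and ${c}^{i}\sqsubset{d}^{i}$ (possible since $\mathrm{int}({L}_{{b}})\neq\varnothing$ and ${L}_{{b}}\subseteq{B}$ forces $\widehat{{c}}^{\,i}$ to be an initial segment of each coordinate projection of any box inside ${L}_{{b}}$), and set $\sigma^{i}({c}):={d}^{i}$. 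Then $\mathbf{S}_{{c}^{0}}\times\mathbf{S}_{{c}^{1}}\supseteq{B}\supseteq{L}_{{b}}\supseteq\mathbf{S}_{{d}^{0}}\times\mathbf{S}_{{d}^{1}}$ gives the first clause of (\ding{72}), and the displayed implications together with ${W}^{\dagger}\subseteq\big(W({d}^{0},{d}^{1})\big)^{c}$ give the second.

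To build ${b}$ I would descend the Lusin tree of ${L}$, using two consequences of ${L}$ being a candidate: (i)~for every ${a}$ the set $\bigcup_{{j}}\mathrm{int}({L}_{{a}\hspace{0.5pt}^{\frown}{j}})$ is dense in $\mathrm{int}({L}_{{a}})$ — indeed, for nonempty open ${V}\subseteq\mathrm{int}({L}_{{a}})$ the $\pi$-net property gives ${e}$ with ${L}_{{e}}\subseteq{V}$, whence ${a}\sqsubseteq{e}$ (by Remark~\ref{rem.st.luz.sch}, as ${e}\sqsubset{a}$ would give ${L}_{{e}}\supsetneq{L}_{{a}}\supseteq{L}_{{e}}$), so $\varnothing\neq\mathrm{int}({L}_{{e}})\subseteq{V}\cap\mathrm{int}({L}_{{a}\hspace{0.5pt}^{\frown}{j}})$ for a suitable ${j}$; and (ii)~for every box ${D}\subseteq\mathrm{int}({L}_{{a}})$ there is ${e}\sqsupset{a}$ with ${L}_{{e}}\subseteq{D}$, by the same argument. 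Carrying the invariant that the current node ${a}_{{k}}$ is \emph{${W}^{\dagger}$-thick} (its interior contains a box $\subseteq{W}^{\dagger}$) and \emph{${B}$-thick} (its interior contains a box $\subseteq{B}$), I would at each stage pass to a child still enjoying both kinds of thickness, applying (i)--(ii) repeatedly to re-spread ${W}^{\dagger}$-witnesses over infinitely many children, and then, from a ${B}$-thick node, drop in one step (via a child) into a node ${b}$ with ${L}_{{b}}\subseteq{B}$, arranging the descent so that this final step is taken from a node that is still ${W}^{\dagger}$-thick; this yields that every proper initial segment of ${b}$ has infinitely many children meeting ${W}^{\dagger}$.

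\emph{The hard part} is precisely this last construction: ensuring simultaneously, for \emph{every} proper initial segment of ${b}$, that infinitely many of its children meet ${W}^{\dagger}$, while still landing ${L}_{{b}}$ inside the ${W}^{\dagger}$-free box ${B}$. The obstruction is that a node whose interior merely meets ${W}^{\dagger}$ need not have infinitely many children meeting ${W}^{\dagger}$ — that intersection can be trapped inside finitely many children — so one cannot simply walk down an arbitrary path; the $\pi$-net density of ${L}$ must be invoked at each level to redistribute ${W}^{\dagger}$-witnesses among infinitely many children, and one must check that this bookkeeping never conflicts with reaching ${B}$ in finitely many steps, the node $\langle\rangle$ being the base case that the argument has to cover.
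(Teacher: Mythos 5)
Your reduction has a genuine gap, and it sits exactly where you flagged ``the hard part.'' You fix the target box ${B}=\mathbf{S}_{\widehat{c}^{\,0}}\times\mathbf{S}_{\widehat{c}^{\,1}}$ and the single forbidden region $W(\widehat{c}^{\,0},\widehat{c}^{\,1})$ \emph{in advance}, and then try to find ${b}$ with ${L}_{{b}}\subseteq{B}$ such that every proper initial segment of ${b}$ has infinitely many children meeting ${W}^{\dagger}$. This sufficient condition is both stronger than needed and not always achievable. Already at ${k}=0$: the children ${L}_{\langle{j}\rangle}$ partition ${}^{\omega}\omega\times{}^{\omega}\omega$, and nothing prevents ${W}^{\dagger}$ from being entirely contained in ${L}_{\langle 0\rangle}$, in which case only one child of the root meets ${W}^{\dagger}$; since ${L}_{{b}}\subseteq{B}$ forces $\lh({b})>0$, no admissible ${b}$ exists. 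Your proposed remedy --- invoking the $\pi\mathsurround=0pt$-net property to ``re-spread ${W}^{\dagger}$-witnesses over infinitely many children'' --- cannot work: the children of a node are determined by ${L}$ and are not yours to redistribute; the $\pi\mathsurround=0pt$-net property only says every nonempty open set contains some ${L}_{{e}}$, and says nothing about how a fixed open set meets the children of a fixed node. Your facts (i) and (ii) are correct but do not bridge this.

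The missing idea is to keep the choice of the forbidden region free until the end. Having ${L}_{{b}_{{c}}}\subseteq\mathbf{S}_{{c}^{0}}\times\mathbf{S}_{{c}^{1}}$ and $\mathbf{S}_{{a}^{0}_{{c}}}\times\mathbf{S}_{{a}^{1}_{{c}}}\subseteq{L}_{{b}_{{c}}}$ (both of which you essentially obtain), consider for each ${n}\in\omega$ the set ${R}_{{c},{n}}=W({a}^{0\frown}_{{c}}{n},{a}^{1\frown}_{{c}}{n})$. By Remark~\ref{rem.<} these sets are pairwise disjoint, so by (4) of Remark~\ref{rem.inn} each of the finitely many families $\shoot_{{L}}({b}_{{c}}{\upharpoonright}{k})$, ${k}<\lh({b}_{{c}})$, satisfies $\shoot_{{L}}({b}_{{c}}{\upharpoonright}{k})\inn{R}_{{c},{n}}$ for at most one ${n}$; hence some ${n}_{{c}}$ avoids all of them, and $\sigma^{{i}}({c})\coloneq{a}^{{i}\frown}_{{c}}{n}_{{c}}$ works. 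Note that a tail failing to lie in ${R}_{{c},{n}_{{c}}}$ need not meet the complement of the big region $W(\widehat{c}^{\,0},\widehat{c}^{\,1})$ --- it may instead meet some other ${R}_{{c},{m}}$ --- which is precisely the slack your fixed-${W}^{\dagger}$ reduction throws away. The remainder of your write-up (the strategy being defined pointwise, the monotonicity of $W$, and the derivation of the first clause of (\ding{72})) is fine.
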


\begin{nota}\label{not.sigma_L}
  Suppose that ${L}$ is a candidate.
  Then we denote by $\sigma_{\hspace{-1pt}\scriptscriptstyle{L}}$ some (fixed) strategy $\sigma$ such that property \textup{(\ding{72})} holds.
\end{nota}

\begin{lemm}\label{lem.M^i.g^i}
  For each game $\Gamma,$  there are sets ${M}^{0},{M}^{1}\in[\omega]^{\omega}$ and functions ${g}^{0}\colon{M}^{0}\to\omega,$ ${g}^{1}\colon{M}^{1}\to\omega$ that possess the following property:

\begin{itemize}
  \item[\textup{(\ding{117})}]
    For each candidate ${L}$
    and each ${a}\in{}^{{<}\omega\hspace{-1pt}}\omega,$
    \begin{itemize}
    \item[\ding{226}\,]
      if game $\Gamma$ follows strategy $\sigma_{\hspace{-1pt}\scriptscriptstyle{L}}$ and
      $\,{L}_{{a}}\ni\langle\rr^{0}(\Gamma),\rr^{1}(\Gamma)\rangle,$

    \item[\ding{226}\,]
      then
      $\ \shoot_{{L}}({a})\:\not\inn\:
      \widehat{\mathbf{S}}_{\rr^{0}(\Gamma)}({M}^{0},{g}^{0})\times
      \widehat{\mathbf{S}}_{\rr^{1}(\Gamma)}({M}^{1},{g}^{1}).$
    \end{itemize}
  \end{itemize}
\end{lemm}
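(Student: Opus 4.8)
The plan is the following. Fix the game $\Gamma$, write $p^{i}\coloneq\rr^{i}(\Gamma)$ for $i\in 2$, and set $\delta^{i}_{n}\coloneq\lh(d^{i}_{n})$; by Remark~\ref{rem.game} we then have $d^{i}_{n}=p^{i}{\upharpoonright}\delta^{i}_{n}$ and $\delta^{i}_{0}<\delta^{i}_{1}<\dots$ with $\delta^{i}_{n}\to\infty$. I would take $g^{i}(m)\coloneq p^{i}(m)+1$ (any value $>p^{i}(m)$ works) so that Remark~\ref{rem.lhd} is applicable: each $\widehat{\mathbf{S}}_{p^{i}}(M^{i},g^{i})$ is the disjoint union of $\{p^{i}\}$ with the clopen ``slabs'' $\widetilde{\mathbf{S}}^{g^{i}(m)}_{p^{i}{\upharpoonright}m}\subseteq\mathbf{S}_{p^{i}{\upharpoonright}m}\setminus\mathbf{S}_{p^{i}{\upharpoonright}(m+1)}$ $({m\in M^{i}})$, it is contained in ${p^{i}}{\downfilledspoon}$, every point of it other than $p^{i}$ is $\vartriangleleft$-above $p^{i}$, and $p^{i}$ is \emph{not} an interior point of it in the Baire space. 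The sets $M^{0},M^{1}\in[\omega]^{\omega}$ I would choose very sparse and suitably interleaved with the two sequences $\langle\delta^{0}_{n}\rangle_{n}$ and $\langle\delta^{1}_{n}\rangle_{n}$; the purpose of $M^{i}$ and $g^{i}$ is to make the product $\widehat{\mathbf{S}}_{p^{0}}(M^{0},g^{0})\times\widehat{\mathbf{S}}_{p^{1}}(M^{1},g^{1})$ so thin around $\langle p^{0},p^{1}\rangle$ that it clashes with the shoot-conditions produced by $\sigma_{\hspace{-1pt}\scriptscriptstyle L}$, and pinning down the exact interleaving is the delicate part.

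Now fix a candidate $L$ and an $a\in{}^{<\omega}\omega$ such that $\Gamma$ follows $\sigma_{\hspace{-1pt}\scriptscriptstyle L}$ and $L_{a}\ni\langle p^{0},p^{1}\rangle$. Since $\Gamma$ follows $\sigma_{\hspace{-1pt}\scriptscriptstyle L}$ we have $d^{i}_{n}=\sigma^{i}_{\hspace{-1pt}\scriptscriptstyle L}(c_{n})$, so property~\textup{(\ding{72})} applied to each $c_{n}=\langle c^{0}_{n},c^{1}_{n}\rangle$ gives $b_{n}\in{}^{<\omega}\omega$ with
$$
\mathbf{S}_{c^{0}_{n}}\times\mathbf{S}_{c^{1}_{n}}\ \supseteq\ L_{b_{n}}\ \supseteq\ \mathbf{S}_{d^{0}_{n}}\times\mathbf{S}_{d^{1}_{n}}
$$
and $\shoot_{L}(b_{n}{\upharpoonright}k)\not\inn(\mathbf{S}_{d^{0}_{n}}{\uparrow}\times\mathbf{S}_{d^{1}_{n}})\cup(\mathbf{S}_{d^{0}_{n}}\times(\mathbf{S}_{d^{1}_{n}}{\uparrow}))$ for all $k<\lh(b_{n})$. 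From $\langle p^{0},p^{1}\rangle\in\bigcap_{n}(\mathbf{S}_{d^{0}_{n}}\times\mathbf{S}_{d^{1}_{n}})\subseteq\bigcap_{n}L_{b_{n}}$ and Remark~\ref{rem.st.luz.sch} the $b_{n}$ form a $\sqsubseteq$-chain; it is unbounded in length, for otherwise the chain stabilizes at some $b$ with $L_{b}\subseteq\bigcap_{n}(\mathbf{S}_{c^{0}_{n}}\times\mathbf{S}_{c^{1}_{n}})=\{\langle p^{0},p^{1}\rangle\}$, a singleton, contradicting that a candidate's sets have nonempty interior. Hence $a\sqsubseteq b_{n}$ with $\lh(a)<\lh(b_{n})$ for every large $n$, so
$$
\shoot_{L}(a)=\shoot_{L}(b_{n}{\upharpoonright}\lh(a))\ \not\inn\ (\mathbf{S}_{d^{0}_{n}}{\uparrow}\times\mathbf{S}_{d^{1}_{n}})\cup(\mathbf{S}_{d^{0}_{n}}\times(\mathbf{S}_{d^{1}_{n}}{\uparrow}))\qquad\text{for all large }n .
$$
Call this property $(\star)$.

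It remains to show $\shoot_{L}(a)\not\inn W$, where $W\coloneq\widehat{\mathbf{S}}_{p^{0}}(M^{0},g^{0})\times\widehat{\mathbf{S}}_{p^{1}}(M^{1},g^{1})$. Suppose not, fix $k$ with $\widetilde{L}^{k}_{a}\subseteq W$, and (enlarging $k$, using that tails shrink and that $\langle p^{0},p^{1}\rangle$ lies in a unique $L_{a^{\frown}j}$) assume $\langle p^{0},p^{1}\rangle\notin\widetilde{L}^{k}_{a}$. Using the slab decomposition, Remarks~\ref{rem.lhd} and~\ref{rem.<}, and that each non-$p^{i}$ point of $\widehat{\mathbf{S}}_{p^{i}}(M^{i},g^{i})$ is $\vartriangleleft$-above $p^{i}$, one computes that for every $n$
$$
W\setminus\bigl[(\mathbf{S}_{d^{0}_{n}}{\uparrow}\times\mathbf{S}_{d^{1}_{n}})\cup(\mathbf{S}_{d^{0}_{n}}\times(\mathbf{S}_{d^{1}_{n}}{\uparrow}))\bigr]=\bigcup\bigl\{\widetilde{\mathbf{S}}^{g^{0}(m_{0})}_{p^{0}{\upharpoonright}m_{0}}\times\widetilde{\mathbf{S}}^{g^{1}(m_{1})}_{p^{1}{\upharpoonright}m_{1}}\ :\ m_{i}\in M^{i},\ m_{i}<\delta^{i}_{n}\bigr\},
$$
a finite union of clopen slab-boxes. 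Thus it suffices to produce a large $n$ for which $\widetilde{L}^{k}_{a}$ misses all these slab-boxes, since that gives $\widetilde{L}^{k}_{a}\subseteq(\mathbf{S}_{d^{0}_{n}}{\uparrow}\times\mathbf{S}_{d^{1}_{n}})\cup(\mathbf{S}_{d^{0}_{n}}\times(\mathbf{S}_{d^{1}_{n}}{\uparrow}))$, i.e. $\shoot_{L}(a)\inn(\mathbf{S}_{d^{0}_{n}}{\uparrow}\times\mathbf{S}_{d^{1}_{n}})\cup(\mathbf{S}_{d^{0}_{n}}\times(\mathbf{S}_{d^{1}_{n}}{\uparrow}))$, contradicting $(\star)$. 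Here I would use that each $L_{a^{\frown}j}$ $(j\geq k)$, being a candidate set, has nonempty interior and hence contains a basic box $\mathbf{S}_{u}\times\mathbf{S}_{v}\subseteq W$; as $\mathbf{S}_{u}\subseteq\widehat{\mathbf{S}}_{p^{0}}(M^{0},g^{0})$ and $p^{0}$ is not interior there, the first coordinate at which $u$ departs from $p^{0}$ must lie in $M^{0}$ (and likewise $v$, $M^{1}$); iterating property~\textup{(\ding{72})} on $c=\langle u,v\rangle$ inside $\widetilde{L}^{k}_{a}$ one builds a new game following $\sigma_{\hspace{-1pt}\scriptscriptstyle L}$ whose moves live in $\widetilde{L}^{k}_{a}$ and fork off $p^{0},p^{1}$ at coordinates in $M^{0},M^{1}$, and against the chosen interleaving of $M^{i}$ with $\langle\delta^{i}_{n}\rangle_{n}$ this forces the ``misses all slab-boxes'' conclusion at suitable large $n$.

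I expect this last step to be the main obstacle. The tension is that the forbidden crosses $(\mathbf{S}_{d^{0}_{n}}{\uparrow}\times\mathbf{S}_{d^{1}_{n}})\cup(\mathbf{S}_{d^{0}_{n}}\times(\mathbf{S}_{d^{1}_{n}}{\uparrow}))$ \emph{shrink} as $n$ increases, while $(\star)$ forbids them only for \emph{large} $n$; so one cannot simply place $W$, or a fixed tail, inside one fixed cross, and must instead pick $M^{0},M^{1}$ so sparse relative to $\langle\delta^{0}_{n}\rangle_{n},\langle\delta^{1}_{n}\rangle_{n}$ that the (finitely many, yet for large $n$ numerous) exceptional slab-boxes at stage $n$ are disjoint from the witnessing tail — this is where the nonempty-interior clause in the definition of a candidate and the precise shape of $\widehat{\mathbf{S}}_{p^{i}}(M^{i},g^{i})$ do the work. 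A secondary nuisance is the reduction ``enlarge $k$ so the tail avoids $\langle p^{0},p^{1}\rangle$ as well as the coordinate axes $\{p^{0}\}\times{}^{\omega}\omega$ and ${}^{\omega}\omega\times\{p^{1}\}$'', which needs care since a tail of $L_{a}$ can a priori touch those axes at non-interior points.
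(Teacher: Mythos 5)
Your scaffolding matches the paper's: the same $g^{i}(m)=\rr^{i}(\Gamma)(m)+1$, the same use of property (\ding{72}) to produce the chain $b_{0}\sqsubset b_{1}\sqsubset\dots$ with $\mathbf{S}_{c^{0}_{n}}\times\mathbf{S}_{c^{1}_{n}}\supseteq L_{b_{n}}\supseteq\mathbf{S}_{d^{0}_{n}}\times\mathbf{S}_{d^{1}_{n}}$, the observation that $a$ is an initial segment of $b_{n}$ for large $n$, hence $\shoot_{L}(a)\not\inn$ the ``cross'' at those stages, and a correct slab decomposition of $W$ minus the cross. But the lemma's actual content is precisely the part you leave open: the concrete choice of $M^{0},M^{1}$ and the verification that some tail of $L_{a}$ lands inside a forbidden cross. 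You acknowledge this (``pinning down the exact interleaving is the delicate part'', ``I expect this last step to be the main obstacle''), and the substitute you sketch (each $L_{a^{\frown}j}$ contains a box $\mathbf{S}_{u}\times\mathbf{S}_{v}\subseteq W$, branch points of $u,v$ lie in $M^{0},M^{1}$, then build an auxiliary game following $\sigma_{\hspace{-1pt}\scriptscriptstyle L}$ inside the tail) is not developed and does not obviously close the argument; ``sparseness'' of $M^{i}$ by itself cannot force the tail to miss the low slab-boxes, since nothing in your setup controls where the pieces $L_{a^{\frown}j}$ sit inside $W$.

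What the paper does instead, and what is missing from your proposal, is twofold. First, $M^{0},M^{1}$ are tied to the game's own move lengths with a parity interleaving: $M^{0}=\{\lh(c^{0}_{2k+1})-1:k\in\omega\}$ and $M^{1}=\{\lh(c^{1}_{2k+2})-1:k\in\omega\}$ (this is legitimate because $M^{i},g^{i}$ may depend on $\Gamma$, though not on $L$ or $a$). Second, one works at the \emph{single} critical stage $\dot n$ with $b_{\dot n-1}\sqsubseteq a\sqsubset b_{\dot n}$, so there is no tension between shrinking crosses and ``large $n$'': (E3) applies to $a=b_{\dot n}{\upharpoonright}k$ directly. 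The localization $L_{a}\subseteq L_{b_{\dot n-1}}\subseteq\mathbf{S}_{c^{0}_{\dot n-1}}\times\mathbf{S}_{c^{1}_{\dot n-1}}$ forces every point of the relevant tail to agree with $\rr^{i}(\Gamma)$ at least up to $\lh(c^{i}_{\dot n-1})$ in each coordinate, so its branch point lies in $M^{i}$ at or beyond that level; the parity split on $\dot n$ then pushes, in one of the two coordinates, the branch point past $\lh(d^{i}_{\dot n})$, giving $T^{i}\subseteq\mathbf{S}_{d^{i}_{\dot n}}$ there, while the other coordinate is automatically in $\mathbf{S}_{d^{i}_{\dot n}}{\uparrow}$ because $\widehat{\mathbf{S}}_{\rr^{i}(\Gamma)}(M^{i},g^{i})\subseteq\rr^{i}(\Gamma){\downfilledspoon}$. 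Hence $\shoot_{L}(a)\inn\big(\mathbf{S}_{d^{0}_{\dot n}}{\uparrow}\times\mathbf{S}_{d^{1}_{\dot n}}\big)\cup\big(\mathbf{S}_{d^{0}_{\dot n}}\times(\mathbf{S}_{d^{1}_{\dot n}}{\uparrow})\big)$, contradicting (E3); no enlargement of $k$ to avoid $\langle\rr^{0}(\Gamma),\rr^{1}(\Gamma)\rangle$ or the ``axes'' is needed. Without the interleaved definition of $M^{i}$ and this localization-plus-parity argument, your proof is a genuine gap rather than a complete alternative.
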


\begin{nota}\label{not.M^i.g^i}
  Suppose that $\Gamma$ is a game.
  Then we denote by ${M}^{0}(\Gamma),$ ${M}^{1}(\Gamma),$ ${g}^{0}(\Gamma),$ ${g}^{1}(\Gamma)$ some (fixed) sets ${M}^{0},{M}^{1}$ and functions ${g}^{0},{g}^{1}$ such that property \textup{(\ding{117})} holds.
\end{nota}

\begin{lemm}\label{lemma.main}
  There exists an indexed family  $\mathcal{F}=\langle\mathcal{F}_{{p}}\rangle_{{p}\in{}^{\omega\hspace{-1pt}}\omega}$ of free filters on $\omega$ such that the space $\langle{}^{\omega\hspace{-1pt}}\omega,\tau_{\scriptscriptstyle\mathcal{F}}\rangle\times
  \langle{}^{\omega\hspace{-1pt}}\omega,\tau_{\scriptscriptstyle\mathcal{F}}\rangle$ \textup{(see Example~\ref{example})} has no Lusin $\pi\mathsurround=0pt$-base.
\end{lemm}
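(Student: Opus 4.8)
The plan is to build the family $\mathcal{F}=\langle\mathcal{F}_{{p}}\rangle_{{p}\in{}^{\omega\hspace{-1pt}}\omega}$ by a diagonal construction over all ``candidate'' Lusin schemes, using Lemmas~\ref{continuum.igr}--\ref{lem.M^i.g^i} as the combinatorial engine. By Lemma~\ref{cand}, any Lusin $\pi\mathsurround=0pt$-base ${L}$ for the square $\langle{}^{\omega\hspace{-1pt}}\omega,\tau_{\scriptscriptstyle\mathcal{F}}\rangle^{2}$ is in particular a candidate (note that condition (F2) guarantees $\mathbf{S}$ is a Lusin $\pi\mathsurround=0pt$-base for the factor, so Lemma~\ref{cand} applies no matter which $\mathcal{F}$ we end up choosing, as long as each $\mathcal{F}_{p}$ is a free filter). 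There are only $\mathfrak{c}$ many candidates (each is a function from ${}^{{<}\omega\hspace{-1pt}}\omega$ to subsets of the square of the Baire space, but the ``nonempty interior'' and $\pi\mathsurround=0pt$-net conditions together with $\sigma$-compactness arguments pin down at most continuum many that matter — more carefully, a candidate is determined by countably many subsets of a space of size $\mathfrak{c}$, so there are $\le 2^{\mathfrak{c}}$ in general, but we only need to defeat those that could be Lusin $\pi\mathsurround=0pt$-bases, and one shows these form a set of size $\le\mathfrak{c}$; alternatively, enumerate all candidates of size $\le\mathfrak{c}$ after observing that WLOG each relevant $L_b$ is a countable union of basic clopen boxes). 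Enumerate the relevant candidates as $\langle{L}^{\xi}\rangle_{\xi<\mathfrak{c}}$.

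The diagonalization runs as follows. Fix a bijective enumeration so that at stage $\xi<\mathfrak{c}$ we handle candidate ${L}^{\xi}$. By Lemma~\ref{lem.sigma_L} (and Notation~\ref{not.sigma_L}), the candidate ${L}^{\xi}$ has an associated strategy $\sigma_{\scriptscriptstyle{L}^{\xi}}$ with property (\ding{72}). By Lemma~\ref{continuum.igr} applied to $\sigma_{\scriptscriptstyle{L}^{\xi}}$, there is a family $\langle\Gamma^{\xi}_{z}\rangle_{z\in{}^{\omega}2}$ of games following $\sigma_{\scriptscriptstyle{L}^{\xi}}$ whose results $\rr^{0}(\Gamma^{\xi}_{z}),\rr^{1}(\Gamma^{\xi}_{z})$ are all pairwise distinct across $\langle z,i\rangle$. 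Since $|{}^{\omega}2|=\mathfrak{c}$ and only $<\mathfrak{c}$ points of ${}^{\omega\hspace{-1pt}}\omega$ have been ``committed'' in earlier stages (each earlier stage commits $\le\mathfrak{c}$ points — here one must be slightly careful and use that each stage commits, say, $\le 2\cdot\mathfrak{c}$ points but we process $\mathfrak{c}$ stages; a standard fix is to interleave so that at stage $\xi$ only $|\xi|+\omega<\mathfrak{c}$ points have been used, or to commit only two points per candidate), we can pick some $z=z(\xi)\in{}^{\omega}2$ such that the four points $\rr^{i}(\Gamma^{\xi}_{z(\xi)})$, $i\in 2$, together with their ``$M^i,g^i$ data'' have not interfered with earlier choices. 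Set $\Gamma_{\xi}\coloneq\Gamma^{\xi}_{z(\xi)}$, let $p^{0}_{\xi}=\rr^{0}(\Gamma_{\xi})$, $p^{1}_{\xi}=\rr^{1}(\Gamma_{\xi})$, and using Lemma~\ref{lem.M^i.g^i} (Notation~\ref{not.M^i.g^i}) extract $M^{0}(\Gamma_{\xi}),M^{1}(\Gamma_{\xi}),g^{0}(\Gamma_{\xi}),g^{1}(\Gamma_{\xi})$. We then commit: $M^{0}(\Gamma_{\xi})\in\mathcal{F}_{p^{0}_{\xi}}$ and $M^{1}(\Gamma_{\xi})\in\mathcal{F}_{p^{1}_{\xi}}$. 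After all $\mathfrak{c}$ stages, for each $p\in{}^{\omega\hspace{-1pt}}\omega$ let $\mathcal{F}_{p}$ be any free filter on $\omega$ extending the (at most countably many, after arranging the bookkeeping so each point is committed at most countably often — or: finitely often) sets we committed to it together with the cofinite filter; since each committed set is infinite and we commit only finitely/countably many with the finite intersection property (ensured by the interleaving), such a free filter exists.

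It remains to check that the resulting $\langle{}^{\omega\hspace{-1pt}}\omega,\tau_{\scriptscriptstyle\mathcal{F}}\rangle^{2}$ has no Lusin $\pi\mathsurround=0pt$-base. Suppose for contradiction that ${L}$ is one. By Lemma~\ref{cand}, ${L}$ is a candidate, so ${L}={L}^{\xi}$ for some $\xi<\mathfrak{c}$. Consider the game $\Gamma_{\xi}$, which follows $\sigma_{\scriptscriptstyle{L}}=\sigma_{\scriptscriptstyle{L}^{\xi}}$. Since $\{{L}_{b}\}$ is a $\pi\mathsurround=0pt$-net containing the point $\langle p^{0}_{\xi},p^{1}_{\xi}\rangle=\langle\rr^{0}(\Gamma_{\xi}),\rr^{1}(\Gamma_{\xi})\rangle$ in the $\tau_{\scriptscriptstyle\mathcal{F}}$-square — and here one uses that, by Remark~\ref{rem.game}(1) together with property (\ding{72}), the point $\langle\rr^{0},\rr^{1}\rangle$ actually lies in $\bigcap_{n}{L}_{b_n}$ for a suitable $\sqsubset$-chain, hence in some ${L}_{a}$ with $a$ of arbitrarily large length — we find ${a}\in{}^{{<}\omega\hspace{-1pt}}\omega$ with ${L}_{a}\ni\langle\rr^{0}(\Gamma_{\xi}),\rr^{1}(\Gamma_{\xi})\rangle$. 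Now apply Lemma~\ref{lem.M^i.g^i} property (\ding{117}) with this ${L}$ and this ${a}$: it yields $\shoot_{{L}}({a})\not\inn\widehat{\mathbf{S}}_{\rr^{0}(\Gamma_{\xi})}(M^{0},g^{0})\times\widehat{\mathbf{S}}_{\rr^{1}(\Gamma_{\xi})}(M^{1},g^{1})$, where $M^{i},g^{i}$ are exactly the data committed so that $M^{i}\in\mathcal{F}_{p^{i}_{\xi}}$; thus $U\coloneq\widehat{\mathbf{S}}_{\rr^{0}(\Gamma_{\xi})}(M^{0},g^{0})\times\widehat{\mathbf{S}}_{\rr^{1}(\Gamma_{\xi})}(M^{1},g^{1})$ is, by (F1), a genuine $\tau_{\scriptscriptstyle\mathcal{F}}$-neighbourhood of $\langle\rr^{0}(\Gamma_{\xi}),\rr^{1}(\Gamma_{\xi})\rangle$ for which no ${L}_{a}$-shoot fits inside. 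But that contradicts clause (L6$'$) (Remark~\ref{rem.L6'}), which a Lusin $\pi\mathsurround=0pt$-base must satisfy at the point $\langle\rr^{0}(\Gamma_{\xi}),\rr^{1}(\Gamma_{\xi})\rangle$ with neighbourhood $U$ — possibly for a larger ${a}$, but since ${L}$ is strict and the $L_b$ containing a fixed point form a $\sqsubseteq$-chain, (L6$'$) for this point and $U$ would be witnessed by some ${a}'$ with ${L}_{a'}\ni\langle\rr^{0},\rr^{1}\rangle$, and every such ${a}'$ is comparable to our ${a}$, so applying (\ding{117}) to ${a}'$ also kills it. This contradiction completes the proof. \emph{The main obstacle} is the cardinal-arithmetic bookkeeping in the diagonalization: one must enumerate candidates in type $\mathfrak{c}$ while guaranteeing that, at each stage, fewer than $\mathfrak{c}$ points have been committed (so Lemma~\ref{continuum.igr}'s $\mathfrak{c}$-many disjoint results give room to choose a fresh $z(\xi)$), and simultaneously that each point of ${}^{\omega\hspace{-1pt}}\omega$ receives only a filter-compatible (finite-intersection-property) set of commitments so that a free filter extending them exists; the cleanest route is to commit at most one $\langle M,g\rangle$-pair to each point $p$ — achievable because a point can arise as $\rr^{i}(\Gamma_{\xi})$ for at most one $\xi$ once we reserve it — and to verify that freeness is automatic since every $\widehat{\mathbf{S}}$-relevant $M$ is infinite.
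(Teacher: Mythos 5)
The heart of your argument --- defeat each potential Lusin $\pi\mathsurround=0pt$-base ${L}$ by feeding a game that follows $\sigma_{\hspace{-1pt}\scriptscriptstyle{L}}$ into Lemma~\ref{lem.M^i.g^i} and committing the resulting sets ${M}^{i}$ to the filters at the result points --- is exactly right, and your closing contradiction is in substance the correct one. The genuine gap is the object you diagonalize over. You enumerate ``the relevant candidates'' in order type $\mathfrak{c}$, but no such enumeration exists: a candidate is an arbitrary strict Lusin scheme on ${}^{\omega\hspace{-1pt}}\omega\times{}^{\omega\hspace{-1pt}}\omega$ whose members are only required to have nonempty interior in ${\omega^{\omega}}\times{\omega^{\omega}}$ (that is all Lemma~\ref{cand} gives); they need not be open in the product of Baire spaces, only in the finer topology $\tau_{\scriptscriptstyle\mathcal{F}}\times\tau_{\scriptscriptstyle\mathcal{F}}$, which is not yet defined at the moment of enumeration. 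Hence there are $2^{\mathfrak{c}}$ candidates in general, the ``WLOG each ${L}_{b}$ is a countable union of basic clopen boxes'' reduction is unjustified, and restricting to candidates ``that could be Lusin $\pi\mathsurround=0pt$-bases'' is circular, since which schemes are Lusin $\pi\mathsurround=0pt$-bases of $\langle{}^{\omega\hspace{-1pt}}\omega,\tau_{\scriptscriptstyle\mathcal{F}}\rangle^{2}$ depends on the family $\mathcal{F}$ you are in the middle of constructing (and even so the count could be $2^{\mathfrak{c}}$, because that square may have $2^{\mathfrak{c}}$ open sets).

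The repair, which is the paper's route and is one step away from what you wrote, is to diagonalize over strategies rather than candidates: a strategy is a pair of maps from the countable set ${}^{{<}\omega\hspace{-1pt}}\omega\times{}^{{<}\omega\hspace{-1pt}}\omega$ into ${}^{{<}\omega\hspace{-1pt}}\omega$, so there are exactly $\mathfrak{c}$ of them, and by Lemma~\ref{lem.sigma_L} every candidate ${L}$ is represented in the list by $\sigma_{\hspace{-1pt}\scriptscriptstyle{L}}$. One then builds, by a recursion of length $\mathfrak{c}$ using Lemma~\ref{continuum.igr}, one game per strategy with all result points pairwise distinct; consequently each point of ${}^{\omega\hspace{-1pt}}\omega$ receives at most one commitment ${M}^{i},$ and the filter bookkeeping (finite intersection property, ``committed at most countably often'') that you hedge about disappears entirely. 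With that replacement your argument coincides with the paper's. One further simplification: in the final contradiction you do not need the detour through the $\pi\mathsurround=0pt$-net property or the comparability of indices; clause (L6$'$) already supplies an ${a}$ with ${L}_{a}\ni\langle\rr^{0}(\Gamma_{x}),\rr^{1}(\Gamma_{x})\rangle$ and $\shoot_{{L}}({a})\inn\widehat{\mathbf{S}}_{\rr^{0}(\Gamma_{x})}({M}^{0},{g}^{0})\times\widehat{\mathbf{S}}_{\rr^{1}(\Gamma_{x})}({M}^{1},{g}^{1}),$ while property (\ding{117}) applied to that very same ${a}$ gives $\shoot_{{L}}({a})\not\inn$ the same set, a direct contradiction.
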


\begin{proof}
  We may assume that $\{\sigma({x}):{x}\in{}^{\omega}{2}\}$ is the set of all strategies,  because the cardinality of this set equals the cardinality of ${}^{\omega}{2}.$
  Using Lemma~\ref{continuum.igr}, we can build (by transfinite recursion on ${}^{\omega}{2},$ well-ordered in the type of its cardinality) an indexed  family $\langle\Gamma_{{x}}\rangle_{{x}\in{}^{\omega}{2}}$ of games such that

  \begin{itemize}
  \item[(A1)]
    game $\Gamma_{{x}}$ follows strategy $\sigma({x})\quad$ for all ${{x}\in{}^{\omega}{2}}\quad$ and
  \item[(A2)]
    $\mathsurround=0pt
    \rr^{{i}}(\Gamma_{{x}})\neq\rr^{{j}}(\Gamma_{{y}})\quad$ for all $\langle{x},{i}\rangle\neq\langle{y},{j}\rangle\in{}^{\omega}{2}\times{2}.$
  \end{itemize}
  Next, using Notation~\ref{not.M^i.g^i}, for every ${{x}\in{}^{\omega}{2}},$ we set
  $$
  {r}^{0}_{{x}}\:\coloneq\:\rr^{0}(\Gamma_{{x}})\in{}^{\omega\hspace{-1pt}}\omega,\quad  {M}^{0}_{{x}}\:\coloneq\:{M}^{0}(\Gamma_{{x}})\in[\omega]^{\omega},\quad
  {g}^{0}_{{x}}\:\coloneq\:{g}^{0}(\Gamma_{{x}})\in{}^{{M}^{0}_{{x}}}\omega,
  $$
  $$
  {r}^{1}_{{x}}\:\coloneq\:\rr^{1}(\Gamma_{{x}})\in{}^{\omega\hspace{-1pt}}\omega,\quad
  {M}^{1}_{{x}}\:\coloneq\:{M}^{0}(\Gamma_{{x}})\in[\omega]^{\omega},\quad
  {g}^{1}_{{x}}\:\coloneq\:{g}^{1}(\Gamma_{{x}})\in{}^{{M}^{1}_{{x}}}\omega.
  $$
  Now we define an indexed family $\mathcal{F}=\langle\mathcal{F}_{{p}}\rangle_{{p}\in{}^{\omega\hspace{-1pt}}\omega}$
  of free filters on $\omega.$ Since all ${r}^{{i}}_{{x}}$ are different by (A2), we may,
  for each pair $\langle{x},{i}\rangle\in{}^{\omega}{2}\times{2},$
  take $\mathcal{F}_{{r}^{i}_{{x}}}$ to be an arbitrary free filter on $\omega$ such that $\mathcal{F}_{{r}^{i}_{{x}}}\ni{M}^{{i}}_{{x}}.$ Otherwise, if ${p}$ differs from all ${r}^{i}_{{x}},$ set $\mathcal{F}_{{p}}$ to be an arbitrary free filter on $\omega.$

  We must prove that the space $\langle{}^{\omega\hspace{-1pt}}\omega,\tau_{\scriptscriptstyle\mathcal{F}}\rangle\times
  \langle{}^{\omega\hspace{-1pt}}\omega,\tau_{\scriptscriptstyle\mathcal{F}}\rangle$
  has no Lusin $\pi\mathsurround=0pt$-base.
  Suppose, on the contrary, that ${L}$ is a Lusin $\pi\mathsurround=0pt$-base for
  $\langle{}^{\omega\hspace{-1pt}}\omega,\tau_{\scriptscriptstyle\mathcal{F}}\rangle\times
  \langle{}^{\omega\hspace{-1pt}}\omega,\tau_{\scriptscriptstyle\mathcal{F}}\rangle.$
  Since the standard Lusin scheme $\mathbf{S}$ is a Lusin $\pi\mathsurround=0pt$-base for $\langle{}^{\omega\hspace{-1pt}}\omega,\tau_{\scriptscriptstyle\mathcal{F}}\rangle$
  (see (F2) of Example~\ref{example}),
  it follows by Lemma~\ref{cand} that ${L}$ is candidate, so there is ${x}\in{}^{\omega}{2}$ such that $\sigma({x})=\sigma_{\hspace{-1pt}\scriptscriptstyle{L}}$
  (see Notation~\ref{not.sigma_L}).
  Since ${M}^{0}_{{x}}\in\mathcal{F}_{{r}^{0}_{{x}}}$ and
  ${g}^{0}_{{x}}\colon{M}^{0}_{{x}}\to\omega,$
  it follows by (F1) of Example~\ref{example} that
  $\widehat{\mathbf{S}}_{{r}^{0}_{{x}}}({M}^{0}_{{x}},{g}^{0}_{{x}})$
  is a neighbourhood of point ${r}^{0}_{{x}}$ in the space $\langle{}^{\omega\hspace{-1pt}}\omega,\tau_{\scriptscriptstyle\mathcal{F}}\rangle;$
  similarly, $\widehat{\mathbf{S}}_{{r}^{1}_{{x}}}({M}^{1}_{{x}},{g}^{1}_{{x}})$
  is a neighbourhood of ${r}^{1}_{{x}}$ in the same space.  Then
  $$
  \widehat{\mathbf{S}}_{{r}^{0}_{{x}}}({M}^{0}_{{x}},{g}^{0}_{{x}})\times
  \widehat{\mathbf{S}}_{{r}^{1}_{{x}}}({M}^{1}_{{x}},{g}^{1}_{{x}})
  \:\in\:
  \nbhds\big(\langle{r}^{0}_{{x}},{r}^{1}_{{x}}\rangle,
  \langle{}^{\omega\hspace{-1pt}}\omega,\tau_{\scriptscriptstyle\mathcal{F}}\rangle\times
  \langle{}^{\omega\hspace{-1pt}}\omega,\tau_{\scriptscriptstyle\mathcal{F}}\rangle\big),
  $$
  so, by Remark~\ref{rem.L6'}, there is ${a}\in{}^{{<}\omega\hspace{-1pt}}\omega$ such that  $$
  {L}_{a}\:\ni\:\langle{r}^{0}_{{x}},{r}^{1}_{{x}}\rangle\quad\text{and}\quad
  \shoot_{{L}}({a})
  \:\inn\;
  \widehat{\mathbf{S}}_{{r}^{0}_{{x}}}({M}^{0}_{{x}},{g}^{0}_{{x}})\times
  \widehat{\mathbf{S}}_{{r}^{1}_{{x}}}({M}^{1}_{{x}},{g}^{1}_{{x}}).
  $$
  On the other hand, game $\Gamma_{{x}}$ follows strategy $\sigma({x})=\sigma_{\hspace{-1pt}\scriptscriptstyle{L}}$ by (A1) and
  ${L}_{{a}}\ni\langle{r}^{0}_{{x}},{r}^{1}_{{x}}\rangle=
  \langle\rr^{0}(\Gamma_{{x}}),\rr^{1}(\Gamma_{{x}})\rangle,$ so
  we get a contradiction with
  $$
  \shoot_{{L}}({a})\:\not\inn\:
      \widehat{\mathbf{S}}_{{r}^{0}_{{x}}}({M}^{0}_{{x}},{g}^{0}_{{x}})\times
      \widehat{\mathbf{S}}_{{r}^{1}_{{x}}}({M}^{1}_{{x}},{g}^{1}_{{x}}),
  $$
  which follows from the choice of ${M}^{0}_{{x}},$ ${M}^{1}_{{x}},$ ${g}^{0}_{{x}},$ and  ${g}^{1}_{{x}}.$
\end{proof}

It looks like we did not use Lemmas~\ref{lem.sigma_L},~\ref{lem.M^i.g^i} in the above proof; actually, these lemmas were needed to make Notations~\ref{not.sigma_L},~\ref{not.M^i.g^i} correct. As a corollary to Lemma~\ref{lemma.main} and (F2) of Example~\ref{example}, we get the main result of the article:

\begin{teo}\label{teo}
  There exists a space with a Lusin $\pi\mathsurround=0pt$-base whose square has no Lusin $\pi\mathsurround=0pt$-base.\hfill$\qed$%
\end{teo}

\section{Proofs of Lemmas}\label{sect.proofs.of.lemmas}

\begin{proof}[\textbf{\textup{Proof of Lemma~\ref{continuum.igr}}}]
  This lemma says that for each strategy $\sigma,$ there exists a family $\langle\Gamma_{{z}}\rangle_{{z}\in{}^{\omega}{2}}$ of games that follow $\sigma$ such that $\rr^{i}(\Gamma_{{z}})\neq\rr^{{j}}(\Gamma_{{y}})$
  for all $\langle{z},{i}\rangle\neq\langle{y},{j}\rangle\in{}^{\omega}{2}\times{2}.$

  In this proof we use the following notation: for ${a},{b}\in{}^{{<}\omega\hspace{-1pt}}\omega,$ ${a}\parallel{b}$ means ${a}\nsqsubseteq{b}$ and ${a}\nsqsupseteq{b}.$
  It is not hard to build
  $\check{c}^{i}_{{u}},\check{d}^{i}_{{u}}\in{}^{{<}\omega\hspace{-1pt}}\omega$ for all ${u}\in{}^{{<}\omega}{2}$ and ${i}\in{2}$ such that the following holds:

  \begin{itemize}
  \item[(B1)]
    $\mathsurround=0pt
    \check{d}^{i}_{{u}}=\sigma^{i}(\langle\check{c}^{0}_{{u}},\check{c}^{1}_{{u}}\rangle)\quad$
    for all ${u}\in{}^{{<}\omega}{2}$ and ${i}\in{2};$
  \item[(B2)]
    $\mathsurround=0pt
    \check{d}^{i}_{{u}}\sqsubset\check{c}^{i}_{{u}\hspace{0.5pt}^{\frown}{0}}\,$ and
    $\check{d}^{i}_{{u}}\sqsubset\check{c}^{i}_{{u}\hspace{0.5pt}^{\frown}{1}}\quad$
    for all ${u}\in{}^{{<}\omega}{2}$ and ${i}\in{2};$
  \item[(B3)]
    if ${n}\in\omega,$
    then

    $\mathsurround=0pt
    \check{c}^{i}_{v}\parallel\check{c}^{j}_{w}\quad$
    for all $\langle{v},{i}\rangle\neq\langle{w},{j}\rangle\in{}^{{n}}{2}\times{2}.$
  \end{itemize}
  Since $\sigma$ is a strategy, it follows from (B1)--(B2) that for each ${z}\in{}^{\omega}{2},$ there is a unique game
  $\Gamma=\big\langle\langle{c}_{{n}}\rangle_{{n}\in\omega},  \langle{d}_{{n}}\rangle_{{n}\in\omega}\big\rangle$
  such that
  ${c}_{{n}}=\langle\check{c}^{0}_{{z}{\upharpoonright}{n}},
  \check{c}^{1}_{{z}{\upharpoonright}{n}}\rangle$ and
  ${d}_{{n}}=\langle\check{d}^{0}_{{z}{\upharpoonright}{n}},
  \check{d}^{1}_{{z}{\upharpoonright}{n}}\rangle$
  for all ${n}\in\omega.$ We denote this game by $\Gamma_{{z}}.$
  Each game $\Gamma_{{z}}$ follows strategy $\sigma$ by (B1). Now, suppose that
  $\langle{z},{i}\rangle\neq\langle{y},{j}\rangle\in{}^{\omega}{2}\times{2}.$
  There is ${n}\in\omega$ such that $\langle{z}{\upharpoonright}{n},{i}
  \rangle\neq\langle{y}{\upharpoonright}{n},{j}\rangle,$ so
  $\check{c}^{{i}}_{{z}{\upharpoonright}{n}}\parallel
  \check{c}^{{j}}_{{y}{\upharpoonright}{n}}$ by (B3).
  Then $\rr^{i}(\Gamma_{{z}})\neq\rr^{{j}}(\Gamma_{{y}})$
  because $\rr^{i}(\Gamma_{{z}})\sqsupseteq\check{c}^{{i}}_{{z}{\upharpoonright}{n}}$
  and $\rr^{{j}}(\Gamma_{{y}})\sqsupseteq\check{c}^{{j}}_{{y}{\upharpoonright}{n}}.$
\end{proof}

\begin{proof}[\textbf{\textup{Proof of Lemma~\ref{cand}}}]
  This lemma says that if $\mathbf{S}$ is a Lusin $\pi\mathsurround=0pt$-base for $\langle{}^{\omega\hspace{-1pt}}\omega,\tau\rangle$
  and $\,{L}\!$ is a Lusin $\pi\mathsurround=0pt$-base for $\langle{}^{\omega\hspace{-1pt}}\omega,\tau\rangle\times
  \langle{}^{\omega\hspace{-1pt}}\omega,\tau\rangle,$
  then ${L}$ is a strict Lusin scheme on the set
  ${}^{\omega\hspace{-1pt}}\omega\times{}^{\omega\hspace{-1pt}}\omega,$
  $\{{L}_{{b}}:{b}\in{}^{{<}\omega\hspace{-1pt}}\omega\}$ is a $\pi\mathsurround=0pt$-net for the space ${\omega^{\omega}}\times{\omega^{\omega}},$
  and each ${L}_{{b}}$ has nonempty interior in ${\omega^{\omega}}\times{\omega^{\omega}}.$

  Recall that ${\omega^{\omega}}=\langle{}^{\omega\hspace{-1pt}}\omega,\tau_{\mathsf{prod}}\rangle.$
  Since $\mathbf{S}$ is a Lusin $\pi\mathsurround=0pt$-base for $\langle{}^{\omega\hspace{-1pt}}\omega,\tau\rangle,$
  it follows that each $\mathbf{S}_{{a}}$ is open in $\langle{}^{\omega\hspace{-1pt}}\omega,\tau\rangle,$ so

  \begin{itemize}
  \item[(C)]
    $\mathsurround=0pt
    \tau\supseteq\tau_{\mathsf{prod}}$
  \end{itemize}
  because
  $\{\mathbf{S}_{{a}}:{a}\in{}^{{<}\omega\hspace{-1pt}}\omega\}$ is a base for ${\omega^{\omega}}$
  by (1) Remark~\ref{rem.baire.space}.
  Since ${L}$ is a Lusin $\pi\mathsurround=0pt$-base for $\langle{}^{\omega\hspace{-1pt}}\omega,\tau\rangle\times
  \langle{}^{\omega\hspace{-1pt}}\omega,\tau\rangle,$
  ${L}$ is a strict Lusin scheme on ${}^{\omega\hspace{-1pt}}\omega\times
  {}^{\omega\hspace{-1pt}}\omega.$
  The family $\{{L}_{{b}}:{b}\in{}^{{<}\omega\hspace{-1pt}}\omega\}$ is a $\pi\mathsurround=0pt$-base for $\langle{}^{\omega\hspace{-1pt}}\omega,\tau\rangle\times
  \langle{}^{\omega\hspace{-1pt}}\omega,\tau\rangle$ by Remark~\ref{pi.base}, so $\{{L}_{{b}}:{b}\in{}^{{<}\omega\hspace{-1pt}}\omega\}$ is a $\pi\mathsurround=0pt$-net for ${\omega^{\omega}}\times{\omega^{\omega}}$ by (C).
  Now, for each ${L}_{{b}}$ there are nonempty ${U}^{{0}},{U}^{{1}}\in\tau$ such that
  ${L}_{{b}}\supseteq{U}^{{0}}\times{U}^{{1}}$ because
  ${L}_{{b}}$ is nonempty and open in $\langle{}^{\omega\hspace{-1pt}}\omega,\tau\rangle\times
  \langle{}^{\omega\hspace{-1pt}}\omega,\tau\rangle.$
  Then there are ${a}^{{0}},{a}^{{1}}\in{}^{{<}\omega\hspace{-1pt}}\omega$ such that
  ${U}^{{0}}\supseteq\mathbf{S}_{{a}^{{0}}}$ and
  ${U}^{{1}}\supseteq\mathbf{S}_{{a}^{{1}}}$ because
  $\{\mathbf{S}_{{a}}:{a}\in{}^{{<}\omega\hspace{-1pt}}\omega\}$ is a $\pi\mathsurround=0pt$-base for $\langle{}^{\omega\hspace{-1pt}}\omega,\tau\rangle$ by Remark~\ref{pi.base}. Therefore
  ${L}_{{b}}\supseteq\mathbf{S}_{{a}^{{0}}}\times\mathbf{S}_{{a}^{{1}}},$
  so the interior of ${L}_{{b}}$ in ${\omega^{\omega}}\times{\omega^{\omega}}$ contains
  $\mathbf{S}_{{a}^{{0}}}\times\mathbf{S}_{{a}^{{1}}},$ which in not empty.
\end{proof}

\begin{proof}[\textbf{\textup{Proof of Lemma~\ref{lem.sigma_L}}}]
  This lemma says that for each candidate ${L},$
  there exists a strategy $\sigma$
  that possesses the following property:

  \begin{itemize}
  \item[\textup{(\ding{72})}]
    For each ${c}=\langle{c}^{0},{c}^{1}\rangle\in{}^{{<}\omega\hspace{-1pt}}\omega\times{}^{{<}\omega\hspace{-1pt}}\omega,$

    there is ${b}\in{}^{{<}\omega\hspace{-1pt}}\omega$ such that
    \begin{itemize}
    \item[\ding{226}\,]
      $\mathsurround=0pt
      \mathbf{S}_{{c}^{0}}\times\mathbf{S}_{{c}^{1}}\:\supseteq\:
      {L}_{{b}}\:\supseteq\:
      \mathbf{S}_{\sigma^{0}({c})}\times
      \mathbf{S}_{\sigma^{1}({c})}\quad$ and
    \item[\ding{226}\,]
      $\mathsurround=0pt
      \shoot_{{L}}({b}{\upharpoonright}{k})\:\not\inn\:
      \big(\mathbf{S}_{\sigma^{0}({c})}{\uparrow}
      \times\mathbf{S}_{\sigma^{1}({c})}\big)\cup
      \big(\mathbf{S}_{\sigma^{0}({c})}
      \times(\mathbf{S}_{\sigma^{1}({c})}{\uparrow})\big)\quad$
      for all ${k}<\lh({b}).$
    \end{itemize}
  \end{itemize}

  Suppose that ${L}$ is a candidate; we must find a pair $\sigma=\langle\sigma^{0},\sigma^{1}\rangle$ of
  functions $\sigma^{0},\sigma^{1}\colon{}^{{<}\omega\hspace{-1pt}}\omega\times{}^{{<}\omega\hspace{-1pt}}\omega\to {}^{{<}\omega\hspace{-1pt}}\omega$ such that (\ding{72}) holds.
  Let ${c}=\langle{c}^{0},{c}^{1}\rangle\in{}^{{<}\omega\hspace{-1pt}}\omega\times{}^{{<}\omega\hspace{-1pt}}\omega.$
  The set $\mathbf{S}_{{c}^{0}}\times\mathbf{S}_{{c}^{1}}$ is nonempty and open in the space ${\omega^{\omega}}\times{\omega^{\omega}},$
  so there is ${b}_{{c}}\in{}^{{<}\omega\hspace{-1pt}}\omega$ such that
  ${L}_{{b}_{{c}}}\subseteq\mathbf{S}_{{c}^{0}}\times\mathbf{S}_{{c}^{1}}$
  because $\{{L}_{{b}}:{b}\in{}^{{<}\omega\hspace{-1pt}}\omega\}$ is a $\pi\mathsurround=0pt$-net for ${\omega^{\omega}}\times{\omega^{\omega}}.$
  Then there are
  ${a}_{{c}}^{0},{a}_{{c}}^{1}\in{}^{{<}\omega\hspace{-1pt}}\omega$
  such that
  $\mathbf{S}_{{a}_{{c}}^{0}}\times\mathbf{S}_{{a}_{{c}}^{1}}\subseteq{L}_{{b}_{{c}}}$
  because ${L}_{{b}_{{c}}}$ has nonempty interior in ${\omega^{\omega}}\times{\omega^{\omega}}$ and
  $\{\mathbf{S}_{{a}}:{a}\in{}^{{<}\omega\hspace{-1pt}}\omega\}$ is a base for ${\omega^{\omega}}.$ For each ${n}\in\omega,$ put

  \begin{itemize}
  \item [\ding{46}\,]
    $\mathsurround=0pt
    {R}_{{c},{n}}\:\coloneq\:
    \big(\mathbf{S}_{{a}_{{c}}^{{0}{\frown}}{n}}{\uparrow} \times\mathbf{S}_{{a}_{{c}}^{{1}{\frown}}{n}}\big)\:\cup\: \big(\mathbf{S}_{{a}_{{c}}^{{0}{\frown}}{n}}    \times(\mathbf{S}_{{a}_{{c}}^{{1}{\frown}}{n}}{\uparrow})\big)
    \:\subseteq\:
    {}^{\omega\hspace{-1pt}}\omega\times{}^{\omega\hspace{-1pt}}\omega\,.$
  \end{itemize}
  Then, for all ${n}\neq{m}\in\omega,$
  \begin{itemize}
  \item [(D)]
    $\mathsurround=0pt
    {R}_{{c},{n}}\cap{R}_{{c},{m}}=\varnothing.$
  \end{itemize}

  To prove (D), suppose on the contrary that ${n}<{m}$ and there is
  ${p}=\langle{p}^{0},{p}^{1}\rangle\in
  {R}_{{c},{n}}\cap{R}_{{c},{m}}.$ We may assume without loss of generality that
  ${p}\in\mathbf{S}_{{a}_{{c}}^{{0}{\frown}}{n}}{\uparrow} \times\mathbf{S}_{{a}_{{c}}^{{1}{\frown}}{n}},$
  so that ${p}^{0}\in\mathbf{S}_{{a}_{{c}}^{{0}{\frown}}{n}}{\uparrow}$
  and ${p}^{1}\in\mathbf{S}_{{a}_{{c}}^{{1}{\frown}}{n}}.$
  Since $\mathbf{S}_{{a}_{{c}}^{{1}{\frown}}{n}}\cap
  \mathbf{S}_{{a}_{{c}}^{{1}{\frown}}{m}}=\varnothing$ (because ${n}\neq{m}$),
  we have ${p}^{1}\nin\mathbf{S}_{{a}_{{c}}^{{1}{\frown}}{m}},$
  so ${p}\nin\mathbf{S}_{{a}_{{c}}^{{0}{\frown}}{m}}{\uparrow} \times\mathbf{S}_{{a}_{{c}}^{{1}{\frown}}{m}}.$
  Then ${p}\in\mathbf{S}_{{a}_{{c}}^{{0}{\frown}}{m}}    \times(\mathbf{S}_{{a}_{{c}}^{{1}{\frown}}{m}}{\uparrow})$
  because ${p}\in{R}_{{c},{m}},$
  therefore
  ${p}^{1}\in\mathbf{S}_{{a}_{{c}}^{{1}{\frown}}{m}}{\uparrow}.$
  This contradicts Remark~\ref{rem.<},
  which says that
  $\mathbf{S}_{{a}_{{c}}^{{1}{\frown}}{n}}\cap
  (\mathbf{S}_{{a}_{{c}}^{{1}{\frown}}{m}}{\uparrow})=\varnothing,$
  so (D) is proved.

  Now, for each ${k}<\lh({b}_{{c}}),$
  there is at most one ${n}\in\omega$ such that
  $\shoot_{{L}}({b}_{{c}}{\upharpoonright}{k})\inn{R}_{{c},{n}}$ ---
  this follows from (D) and (4) of Remark~\ref{rem.inn}.
  Then there is some ${n}_{{c}}\in\omega$ such that, for all ${k}<\lh({b}_{{c}}),$
  $$
  \shoot_{{L}}({b}_{{c}}{\upharpoonright}{k})\:\not\inn\:
  {R}_{{c},{n}_{{c}}}\:=\:
  \big(\mathbf{S}_{{a}_{{c}}^{{0}{\frown}}{n}_{{c}}}{\uparrow} \times\mathbf{S}_{{a}_{{c}}^{{1}{\frown}}{n}_{{c}}}\big)\:\cup\: \big(\mathbf{S}_{{a}_{{c}}^{{0}{\frown}}{n}_{{c}}}    \times(\mathbf{S}_{{a}_{{c}}^{{1}{\frown}}{n}_{{c}}}{\uparrow})\big).
  $$
  Also we have
  $$
  \mathbf{S}_{{c}^{0}}\times\mathbf{S}_{{c}^{1}}\:\supseteq\:
  {L}_{{b}_{{c}}}\:\supseteq\:
  \mathbf{S}_{{a}_{{c}}^{0}}\times\mathbf{S}_{{a}_{{c}}^{1}}
  \:\supseteq\:\mathbf{S}_{{{a}_{{c}}^{{0}{\frown}}{n}_{{c}}}}\times
  \mathbf{S}_{{{a}_{{c}}^{{1}{\frown}}{n}_{{c}}}},
  $$
  so $\sigma^{0}({c})\coloneq{a}_{{c}}^{{0}{\frown}}{n}_{{c}}$ and
  $\sigma^{1}({c})\coloneq{a}_{{c}}^{{1}{\frown}}{n}_{{c}}$ define a strategy $\sigma$ that we search.
\end{proof}

\begin{proof}[\textbf{\textup{Proof of Lemma~\ref{lem.M^i.g^i}}}]
  This lemma says that for each game $\Gamma,$  there are sets ${M}^{0},{M}^{1}\in[\omega]^{\omega}$ and functions ${g}^{0}\colon{M}^{0}\to\omega,$ ${g}^{1}\colon{M}^{1}\to\omega$ that possess the following property:

  \begin{itemize}
  \item[\textup{(\ding{117})}]
    For each candidate ${L}$
    and each ${a}\in{}^{{<}\omega\hspace{-1pt}}\omega,$
    \begin{itemize}
    \item[\ding{226}\,]
      if game $\Gamma$ follows strategy $\sigma_{\hspace{-1pt}\scriptscriptstyle{L}}$ and
      $\,{L}_{{a}}\ni\langle\rr^{0}(\Gamma),\rr^{1}(\Gamma)\rangle,$

    \item[\ding{226}\,]
      then
      $\ \shoot_{{L}}({a})\:\not\inn\:
      \widehat{\mathbf{S}}_{\rr^{0}(\Gamma)}({M}^{0},{g}^{0})\times
      \widehat{\mathbf{S}}_{\rr^{1}(\Gamma)}({M}^{1},{g}^{1}).$
    \end{itemize}
  \end{itemize}
  Suppose that $\Gamma=\big\langle\langle{c}_{{n}}\rangle_{{n}\in\omega},  \langle{d}_{{n}}\rangle_{{n}\in\omega}\big\rangle$ is a game. Put   ${r}^{0}\coloneq\rr^{0}(\Gamma)\in{}^{\omega\hspace{-1pt}}\omega$ and
  ${r}^{1}\coloneq\rr^{1}(\Gamma)\in{}^{\omega\hspace{-1pt}}\omega.$
  Here are sets and functions that we must find:

  \begin{itemize}
  \item[\ding{46}\,]
    $\mathsurround=0pt
    {M}^{0}\:\coloneq\:\{\lh({c}^{0}_{2{k}+1})-1:{k}\in\omega\};$
  \item[\ding{46}\,]
    $\mathsurround=0pt
    {M}^{1}\:\coloneq\:\{\lh({c}^{1}_{2{k}+2})-1:{k}\in\omega\};$
  \item[\ding{46}\,]
    $\mathsurround=0pt
    {g}^{0}\,$ is the function from ${M}^{0}$ to $\omega$ such that ${g}^{0}({m})={r}^{0}({m})+1$ for all
    ${m}\in{M}^{0};$
  \item[\ding{46}\,]
    $\mathsurround=0pt
    {g}^{1}\,$ is the function from ${M}^{1}$ to $\omega$ such that ${g}^{1}({m})={r}^{1}({m})+1$ for all
    ${m}\in{M}^{1}.$
  \end{itemize}
  We must show that (\ding{117}) holds.
  Suppose that ${L}$ is a candidate, ${a}\in{}^{{<}\omega\hspace{-1pt}}\omega,$ game $\Gamma$ follows strategy $\sigma_{\hspace{-1pt}\scriptscriptstyle{L}},$ and
  ${L}_{{a}}\ni\langle{r}^{0},{r}^{1}\rangle;$
  we must prove that $\shoot_{{L}}({a})\:\not\inn\:
  \widehat{\mathbf{S}}_{{r}^{0}}({M}^{0},{g}^{0})\times      \widehat{\mathbf{S}}_{{r}^{1}}({M}^{1},{g}^{1}).$
  Suppose on the contrary that

  \begin{itemize}
  \item[(E1)]
    $\mathsurround=0pt
    \shoot_{{L}}({a})\:\inn\:
    \widehat{\mathbf{S}}_{{r}^{0}}({M}^{0},{g}^{0})\times      \widehat{\mathbf{S}}_{{r}^{1}}({M}^{1},{g}^{1}).$
  \end{itemize}

  Let ${b}_{{-}1}\coloneq\langle\rangle\in{}^{{<}\omega\hspace{-1pt}}\omega.$
  Recall that ${d}^{{i}}_{{n}}=\sigma_{\hspace{-1pt}\scriptscriptstyle{L}}^{i}({c}_{{n}})$ for all ${n}\in\omega$ and ${i}\in{2}$ because game $\Gamma$ follows strategy $\sigma_{\hspace{-1pt}\scriptscriptstyle{L}}.$
  If ${n}\in\omega,$ then, by definition of $\sigma_{\hspace{-1pt}\scriptscriptstyle{L}}$ (see Notaiton~\ref{not.sigma_L}),
  for ${c}_{{n}}=\langle{c}^{0}_{{n}},{c}^{1}_{{n}}\rangle
  \in{}^{{<}\omega\hspace{-1pt}}\omega\times{}^{{<}\omega\hspace{-1pt}}\omega,$
  there is ${b}_{{n}}\in{}^{{<}\omega\hspace{-1pt}}\omega$ such that

  \begin{itemize}
  \item[(E2)]
    $\mathsurround=0pt
    \mathbf{S}_{{c}^{0}_{{n}}}\times\mathbf{S}_{{c}^{1}_{{n}}}\:\supseteq\:
    {L}_{{b}_{{n}}}\:\supseteq\:
    \mathbf{S}_{{d}^{0}_{{n}}}\times
    \mathbf{S}_{{d}^{1}_{{n}}}\quad$ and
  \item[(E3)]
    $\mathsurround=0pt
    \shoot_{{L}}({b}_{{n}}{\upharpoonright}{k})\:\not\inn\:
    \big(\mathbf{S}_{{d}^{0}_{{n}}}{\uparrow}
    \times\mathbf{S}_{{d}^{1}_{{n}}}\big)\cup
    \big(\mathbf{S}_{{d}^{0}_{{n}}}
    \times(\mathbf{S}_{{d}^{1}_{{n}}}{\uparrow})\big)\quad$
    for all ${k}<\lh({b}_{{n}}).$
  \end{itemize}
  We have ${L}_{{b}_{{n}}}\supset{L}_{{b}_{{n}+1}}$ and
  ${L}_{{b}_{{n}}}\ni\langle{r}^{0},{r}^{1}\rangle$ for all ${n}\in\omega$
  by (E2) and (1) of Remark~\ref{rem.game}.
  Then ${b}_{{n}}\sqsubset{b}_{{n}+1}$ for all ${n}\in\omega$
  by (2) of Remark~\ref{rem.st.luz.sch}.
  Also we have ${L}_{{a}}\ni\langle{r}^{0},{r}^{1}\rangle$ by the choice of ${a},$
  so ${L}_{{a}}\cap{L}_{{b}_{{n}}}\neq\varnothing,$
  and hence ${a}\sqsupseteq{b}_{{n}}$ or ${a}\sqsubseteq{b}_{{n}}$ for all ${n}\in\omega$
  by (3) of Remark~\ref{rem.st.luz.sch}.
  Therefore there is $\dot{n}\in\omega$ such that ${b}_{\dot{n}-1}\sqsubseteq{a}\sqsubset{b}_{\dot{n}}.$

  Let ${c}^{0}_{{-}1}\coloneq{c}^{1}_{{-}1}\coloneq\langle\rangle\in{}^{{<}\omega\hspace{-1pt}}\omega.$
  We have $\shoot_{{L}}({a})\inn{L}_{{a}}$ by (1) of Remark~\ref{rem.inn},
  ${L}_{{a}}\subseteq{L}_{{b}_{\dot{n}-1}}$ by the choice of $\dot{n}$ and (2) of Remark~\ref{rem.st.luz.sch}, and
  ${L}_{{b}_{\dot{n}-1}} \subseteq\,\mathbf{S}_{{c}^{0}_{\dot{n}-1}}\!\!\times\mathbf{S}_{{c}^{1}_{\dot{n}-1}}$
  by (E2) (and because ${L}_{{b}_{-1}}={}^{\omega\hspace{-1pt}}\omega\times{}^{\omega\hspace{-1pt}}\omega
  =\mathbf{S}_{{c}^{0}_{-1}}\times\mathbf{S}_{{c}^{1}_{-1}}$
  by (L2) of Definition~\ref{def.luz.p.base}), so $\shoot_{{L}}({a})\inn
  \mathbf{S}_{{c}^{0}_{\dot{n}-1}}\!\!\times\mathbf{S}_{{c}^{1}_{\dot{n}-1}}$
  by (2) of Remark~\ref{rem.inn}.
  Therefore
  $$
  \shoot_{{L}}({a})\:\inn\:
  \big(\,\widehat{\mathbf{S}}_{{r}^{0}}({M}^{0},{g}^{0})\times      \widehat{\mathbf{S}}_{{r}^{1}}({M}^{1},{g}^{1})\big)
  \,\cap\,
  (\,\mathbf{S}_{{c}^{0}_{\dot{n}-1}}\!\!\times\mathbf{S}_{{c}^{1}_{\dot{n}-1}})
  $$
  by (E1) and (3) of Remark~\ref{rem.inn}, and then using (2) of Remark~\ref{rem.inn} we get
  $$
  \shoot_{{L}}({a})\:\inn\:
  \big(\,\widehat{\mathbf{S}}_{{r}^{0}}({M}^{0},{g}^{0})\cap\mathbf{S}_{{c}^{0}_{\dot{n}-1}}\big)
  \,\times\,
  \big(\,\widehat{\mathbf{S}}_{{r}^{1}}({M}^{1},{g}^{1})\cap\mathbf{S}_{{c}^{1}_{\dot{n}-1}}\big).
  $$
  Let

  \begin{itemize}
  \item[\ding{46}\,]
    $\mathsurround=0pt
    {T}^{0}\:\coloneq\:\widehat{\mathbf{S}}_{{r}^{0}}({M}^{0},{g}^{0})\cap \mathbf{S}_{{c}^{0}_{\dot{n}-1}}$ and
  \item[\ding{46}\,]
    $\mathsurround=0pt
    {T}^{1}\:\coloneq\:
  \widehat{\mathbf{S}}_{{r}^{1}}({M}^{1},{g}^{1})\cap
  \mathbf{S}_{{c}^{1}_{\dot{n}-1}},$
  \end{itemize}
  so that

  \begin{itemize}
  \item[(E4)]
    $\mathsurround=0pt
    \shoot_{{L}}({a})\:\inn\:{T}^{0}\times{T}^{1}.$
  \end{itemize}

  Next we show the following:

  \begin{itemize}
  \item[($\mathsurround=0pt \text{E5}^0$)]
    if $\dot{n}$ is even, then
    ${T}^{0}\subseteq\mathbf{S}_{{d}^{0}_{\dot{n}}}\quad$ and
  \item[($\mathsurround=0pt \text{E5}^1$)]
    if $\dot{n}$ is odd, then
    ${T}^{1}\subseteq\mathbf{S}_{{d}^{1}_{\dot{n}}}.$
  \end{itemize}
  To prove ($\mathsurround=0pt \text{E5}^0$), assume that $\dot{n}$ is even and ${q}\in{T}^{0}.$
  Recall that
  $\widehat{\mathbf{S}}_{{r}^{0}}({M}^{0},{g}^{0})=    \{{r}^{0}\}\cup\bigcup_{{m}\in{M}^{0}}
  \widetilde{\mathbf{S}}^{{g}^{0}({m})}_{{p}^{0}{\upharpoonright}{m}}.$
  If ${q}={r}^{0},$ then ${q}\in\mathbf{S}_{{d}^{0}_{\dot{n}}}$
  by (1) of Remark~\ref{rem.game}.
  If ${q}\neq{r}^{0},$ then there is $\dot{m}\in{M}^{0}$ such that
  ${q}\in\widetilde{\mathbf{S}}^{{g}^{0}(\dot{m})}_{{r}^{0}{\upharpoonright}\dot{m}},$
  so

  \begin{itemize}
  \item[($\mathsurround=0pt \text{E6}^0$)]
    $\mathsurround=0pt
    {q}\in\widetilde{\mathbf{S}}^{{r}^{0}(\dot{m})+1}_{{r}^{0}{\upharpoonright}\dot{m}}$
  \end{itemize}
  by the choice of ${g}^{0}.$
  Since $\dot{m}\in{M}^{0},$ there is $\dot{k}\in\omega$ such that

  \begin{itemize}
  \item[($\mathsurround=0pt \text{E7}^0$)]
    $\mathsurround=0pt
    \dot{m}=\lh({c}^{0}_{2\dot{k}+1})-1.$
  \end{itemize}
  We have
  ${q}\nin{\mathbf{S}}_{{r}^{0}{\upharpoonright}(\dot{m}+1)}$
  by ($\mathsurround=0pt \text{E6}^0$) and (1) of Remark~\ref{rem.lhd}, so
  ${q}\nin{\mathbf{S}}_{{r}^{0}{\upharpoonright}\lh({c}^{0}_{2\dot{k}+1})}
  ={\mathbf{S}}_{{c}^{0}_{2\dot{k}+1}}$ by ($\mathsurround=0pt \text{E7}^0$) and (2) of Remark~\ref{rem.game}.
  On the other hand, ${q}\in{\mathbf{S}}_{{c}^{0}_{\dot{n}-1}}$
  by definition of ${T}^{0},$  therefore
  ${\mathbf{S}}_{{c}^{0}_{2\dot{k}+1}}\nsupseteq{\mathbf{S}}_{{c}^{0}_{\dot{n}-1}},$
  hence
  ${c}^{0}_{2\dot{k}+1}\nsqsubseteq{c}^{0}_{\dot{n}-1}$ by (2) of Remark~\ref{rem.st.luz.sch},
  so $2\dot{k}+1\nleqslant\dot{n}-1$ by (5) of Remark~\ref{rem.game},
  that is, $2\dot{k}+1>\dot{n}-1.$
  Then $2\dot{k}\geqslant\dot{n}-1$
  and also $2\dot{k}\neq\dot{n}-1$ because $\dot{n}$ is even, so we get

  \begin{itemize}
  \item[($\mathsurround=0pt \text{E8}^0$)]
    $\mathsurround=0pt
    2\dot{k}\geqslant\dot{n}.$
  \end{itemize}
  Now,  ${q}\in
  \widetilde{\mathbf{S}}^{{r}^{0}(\dot{m})+1}_{{r}^{0}{\upharpoonright}\dot{m}}$ by ($\mathsurround=0pt \text{E6}^0$),
  $\widetilde{\mathbf{S}}^{{r}^{0}(\dot{m})+1}_{{r}^{0}{\upharpoonright}\dot{m}}
  \subseteq\widetilde{\mathbf{S}}_{{r}^{0}{\upharpoonright}\dot{m}}$
  by (1) of Remark~\ref{rem.lhd},
  $\widetilde{\mathbf{S}}_{{r}^{0}{\upharpoonright}\dot{m}}=
  {\mathbf{S}}_{{r}^{0}{\upharpoonright}(\lh({c}^{0}_{2\dot{k}+1})-1)}$
  by ($\mathsurround=0pt \text{E7}^0$),
  ${\mathbf{S}}_{{r}^{0}{\upharpoonright}(\lh({c}^{0}_{2\dot{k}+1})-1)}
  \subseteq{\mathbf{S}}_{{r}^{0}{\upharpoonright}\lh({d}^{0}_{2\dot{k}})}$
  by (4) of Remark~\ref{rem.game} and (2) of Remark~\ref{rem.st.luz.sch},
  ${\mathbf{S}}_{{r}^{0}{\upharpoonright}\lh({d}^{0}_{2\dot{k}})}
  ={\mathbf{S}}_{{d}^{0}_{2\dot{k}}}$
  by (3) of Remark~\ref{rem.game}, and
  ${\mathbf{S}}_{{d}^{0}_{2\dot{k}}}
  \subseteq{\mathbf{S}}_{{d}^{0}_{\dot{n}}}$
  by ($\mathsurround=0pt \text{E8}^0$), (6) of Remark~\ref{rem.game}, and (2) of Remark~\ref{rem.st.luz.sch}.
  Therefore ${q}\in\mathbf{S}_{{d}^{0}_{\dot{n}}},$ which proves ($\mathsurround=0pt \text{E5}^0$).

  The proof of ($\mathsurround=0pt \text{E5}^1$) is similar: Assume that $\dot{n}$ is odd and ${q}\in{T}^{1}.$ $\widehat{\mathbf{S}}_{{r}^{1}}({M}^{1},{g}^{1})=    \{{r}^{1}\}\cup\bigcup_{{m}\in{M}^{1}}
  \widetilde{\mathbf{S}}^{{g}^{1}({m})}_{{p}^{1}{\upharpoonright}{m}}.$
  If ${q}={r}^{1},$ then ${q}\in\mathbf{S}_{{d}^{1}_{\dot{n}}}.$
  If ${q}\neq{r}^{1},$ then there is $\dot{m}\in{M}^{1}$ such that
  ${q}\in\widetilde{\mathbf{S}}^{{g}^{1}(\dot{m})}_{{r}^{1}{\upharpoonright}\dot{m}},$
  so

  \begin{itemize}
  \item[($\mathsurround=0pt \text{E6}^1$)]
    $\mathsurround=0pt
    {q}\in\widetilde{\mathbf{S}}^{{r}^{1}(\dot{m})+1}_{{r}^{1}{\upharpoonright}\dot{m}}\:.$
  \end{itemize}
  Since $\dot{m}\in{M}^{1},$ there is $\dot{k}\in\omega$ such that

  \begin{itemize}
  \item[($\mathsurround=0pt \text{E7}^1$)]
    $\mathsurround=0pt
    \dot{m}=\lh({c}^{1}_{2\dot{k}+2})-1.$
  \end{itemize}
  We have
  ${q}\nin{\mathbf{S}}_{{r}^{1}{\upharpoonright}(\dot{m}+1)}=
  {\mathbf{S}}_{{r}^{1}{\upharpoonright}\lh({c}^{1}_{2\dot{k}+2})}
  ={\mathbf{S}}_{{c}^{1}_{2\dot{k}+2}}.$
  On the other hand, ${q}\in{\mathbf{S}}_{{c}^{1}_{\dot{n}-1}},$
  therefore
  ${\mathbf{S}}_{{c}^{1}_{2\dot{k}+2}}\nsupseteq{\mathbf{S}}_{{c}^{1}_{\dot{n}-1}},$
  hence
  ${c}^{1}_{2\dot{k}+2}\nsqsubseteq{c}^{1}_{\dot{n}-1},$
  so $2\dot{k}+2\nleqslant\dot{n}-1,$
  that is, $2\dot{k}+2>\dot{n}-1.$
  Then $2\dot{k}+1\geqslant\dot{n}-1$
  and also $2\dot{k}+1\neq\dot{n}-1$ because $\dot{n}$ is odd, so we get

  \begin{itemize}
  \item[($\mathsurround=0pt \text{E8}^1$)]
    $\mathsurround=0pt
    2\dot{k}+1\geqslant\dot{n}.$
  \end{itemize}
  By the same argument as above, we can write
  $$
  {q}\in\widetilde{\mathbf{S}}^{{r}^{1}(\dot{m})+1}_{{r}^{1}{\upharpoonright}\dot{m}}
  \:\subseteq\:
  \widetilde{\mathbf{S}}_{{r}^{1}{\upharpoonright}\dot{m}}
  \:=\:
  {\mathbf{S}}_{{r}^{1}{\upharpoonright}(\lh({c}^{1}_{2\dot{k}+2})-1)}
  \:\subseteq\:
  {\mathbf{S}}_{{r}^{1}{\upharpoonright}\lh({d}^{1}_{2\dot{k}+1})}
  \:=\:
  {\mathbf{S}}_{{d}^{1}_{2\dot{k}+1}}
  \:\subseteq\:
  {\mathbf{S}}_{{d}^{1}_{\dot{n}}}.
  $$
  This proves ($\mathsurround=0pt \text{E5}^1$).

  Now, ($\mathsurround=0pt \text{E5}^0$) and ($\mathsurround=0pt \text{E5}^1$) imply

  \begin{itemize}
  \item[(E5)]
    $\mathsurround=0pt
    {T}^{0}\subseteq\mathbf{S}_{{d}^{0}_{\dot{n}}}\quad\mathsf{or}\quad
    {T}^{1}\subseteq\mathbf{S}_{{d}^{1}_{\dot{n}}}.$
  \end{itemize}
  Also, for each ${i}\in{2},$ we have
  ${T}^{{i}}\subseteq\widehat{\mathbf{S}}_{{r}^{{i}}}({M}^{{i}},{g}^{{i}})$
  by definition of ${T}^{{i}},$
  $\widehat{\mathbf{S}}_{{r}^{i}}({M}^{i},{g}^{i})    \subseteq{r}^{{i}}{\downfilledspoon}$
  by (2) of Remark~\ref{rem.lhd}, and
  ${r}^{{{i}}}{\downfilledspoon}\,\subseteq\,
  \mathbf{S}_{{d}^{{{i}}}_{\dot{n}}}{\uparrow}$
  (because ${r}^{{{i}}}\in\mathbf{S}_{{d}^{{{i}}}_{\dot{n}}}$
  by (1) of Remark~\ref{rem.game}), so

  \begin{itemize}
  \item[(E9)]
    $\mathsurround=0pt
    {T}^{0}\,\subseteq\,\mathbf{S}_{{d}^{0}_{\dot{n}}}{\uparrow}\quad\mathsf{and}\quad
    {T}^{1}\,\subseteq\,\mathbf{S}_{{d}^{1}_{\dot{n}}}{\uparrow}.$
  \end{itemize}
  It follows from (E5) and (E9) that
  ${T}^{0}\times{T}^{1}\subseteq
  \mathbf{S}_{{d}^{0}_{\dot{n}}}\times
  (\mathbf{S}_{{d}^{1}_{\dot{n}}}{\uparrow})$ or
  ${T}^{0}\times{T}^{1}\subseteq
  \mathbf{S}_{{d}^{0}_{\dot{n}}}{\uparrow}\times
  \mathbf{S}_{{d}^{1}_{\dot{n}}},$
  so
  $$
  {T}^{0}\times{T}^{1}\:\subseteq\:\big(\mathbf{S}_{{d}^{0}_{\dot{n}}}{\uparrow}\times
  \mathbf{S}_{{d}^{1}_{\dot{n}}}\big)\cup
  \big(\mathbf{S}_{{d}^{0}_{\dot{n}}}\times
  (\mathbf{S}_{{d}^{1}_{\dot{n}}}{\uparrow})\big).
  $$
  Then, using (E4) and (2) of Remark~\ref{rem.inn}, we get
  $$
  \shoot_{{L}}({a})\:\inn\:
  \big(\mathbf{S}_{{d}^{0}_{\dot{n}}}{\uparrow}\times
  \mathbf{S}_{{d}^{1}_{\dot{n}}}\big)\cup
  \big(\mathbf{S}_{{d}^{0}_{\dot{n}}}\times
  (\mathbf{S}_{{d}^{1}_{\dot{n}}}{\uparrow})\big).
  $$
  But also, since ${a}\sqsubset{b}_{\dot{n}}$ by the choice of $\dot{n},$
  there is ${k}<\lh({b}_{\dot{n}})$ such that
  ${a}={b}_{\dot{n}}{\upharpoonright}{k}.$ Then
  $\shoot_{{L}}({a})=\shoot_{{L}}({b}_{\dot{n}}{\upharpoonright}{k})$
  and we have a contradiction with (E3), which says
  $$
  \shoot_{{L}}({b}_{\dot{n}}{\upharpoonright}{k})
  \:\not\inn\:
  \big(\mathbf{S}_{{d}^{0}_{\dot{n}}}{\uparrow}
  \times\mathbf{S}_{{d}^{1}_{\dot{n}}}\big)\cup
  \big(\mathbf{S}_{{d}^{0}_{\dot{n}}}
  \times(\mathbf{S}_{{d}^{1}_{\dot{n}}}{\uparrow})\big).
  $$
\end{proof}


\end{document}